\theoremstyle{definition}
\theoremstyle{theorem}
\newtheorem*{thm*}{Theorem}
\newtheorem{prop}{Proposition}[section]
\newtheorem{lemma}[prop]{Lemma}
\newtheorem{thm}[prop]{Theorem}
\newtheorem{corollary}[prop]{Corollary}
\newtheoremstyle{pourlesremarques}{\topsep}{\topsep}{\normalfont}{}{\bfseries}{.}{ }{}
\theoremstyle{pourlesremarques}
\newtheorem{rem}[prop]{Remark}
\newtheorem*{rem*}{Remark}
\newtheoremstyle{pourlesexemples}{\topsep}{\topsep}{\normalfont}{}{\bfseries}{.}{ }{}
\theoremstyle{pourlesexemples}
\def\presuper#1#2%
\newcommand{\Ind}{\operatorname{Ind}}
\newcommand{\ind}{\operatorname{ind}}
\newcommand{\Hom}{\operatorname{Hom}}
\renewcommand{\subset}{\subseteq}
\newcommand{\GL}{\operatorname{GL}}
\renewcommand{\l}{\lambda}
\newcommand{\C}{\mathbb{C}}
\def\rl{\mathrm{r}_{\ell}}
\def\GL{\operatorname{GL}}
\def\\Hom{\operatorname{\Hom}}
\def\dim{\operatorname{dim}}
\def\Ql{\overline{\mathbb{Q}_{\ell}}}
\def\Zl{\overline{\mathbb{Z}_{\ell}}}
\def\Fl{\overline{\mathbb{F}_{\ell}}}
\def\leq{\leqslant}
\def\geq{\geqslant}
\def\presuper#1#2%
\DeclareRobustCommand{\rvdots}{%
  \vbox{
    \baselineskip4\p@\lineskiplimit\z@
    \kern-\p@
    \hbox{.}\hbox{.}\hbox{.}  \hbox{.}
  }}
\definecolor{dviolet}{RGB}{100,0,100}
\definecolor{blue}{RGB}{0,0,100}
\title{Whittaker functionals and contragredient in characteristic not $p$}
\author{Nadir Matringe and Justin Trias}
\date{\today}
\begin{document}
\maketitle

\abstract{Let $R$ be an algebraically closed field and $\ell$ be its characteristic. Let $G$ be a locally profinite group having a compact open subgroup of invertible pro-order in $R$. Take $N$ a closed subgroup of $G$ exhausted by compact subgroups of invertible pro-orders in $R$ and fix a smooth character $\theta$ of $N$. For $\pi$ an irreducible smooth $R$-representation of $G$ whose matrix coefficients are compactly supported modulo the center (we call it $Z$-compact), we show that the dimensions $\Hom_{N}(\pi,\theta)$ and $\Hom_{N}(\pi^\vee,\theta^{-1})$ are equal provided one of the two is finite. We derive a few applications from this result. First, we prove that any $G$-intertwiner from $\pi$ to $\Ind_N^G(\theta)$ has image in $\textup{ind}_{ZN}^G(\omega_\pi \theta)$, where $\omega_\pi$ is the central character of $\pi$, and the Whittaker space of $\pi$ agrees with that of its Whittaker periods. Second, it applies to quasi-split groups over non Archimedean local fields of residual characteristic $p \neq \ell$ and where $N$ is the unipotent radical of a Borel subgroup of $G$ together with a generic character $\theta$. Our equality of dimensions turns out to be a good replacement for Rodier's crucial use of complex conjugation in the proof of Whittaker multiplicity at most one for cuspidal representations. Then by a lifting argument, we recover Rodier's generalization of the Gelfand-Kazhdan property for $R$-valued $(\theta^{-1}\otimes \theta)$-equivariant distributions on $G$. This latter fact, together with Rodier's heridity property, which is valid in our context, leads to the multiplicity at most one of Whittaker functionals over $R$. We also give other applications, including a generalization over $R$ of a result for complex representations proved by Chang Yang and initially conjectured by Dipendra Prasad.}

\section{Introduction}

Let $F$ be a non Archimedean local field of residual characteristic $p$ and let $G = \textbf{G}(F)$ be the $F$-points of a quasi-split reductive group $\textbf{G}$ over $F$. Fix a Borel subgroup of $G$ and call $N$ its unipotent radical. Let $R$ be an algebraically closed field of characteristic $\ell \neq p$. Choose as well a non-degenerate character $\theta$ of $N$. Over $R=\C$, the multiplicity at most one of Whittaker functionals proved by Rodier in \cite{Rodier}, following Gelfand-Kazhdan \cite{GK} for $G=\GL_n(F)$ asserts that for all irreducible (admissible) complex representations $\pi$ of $G$, the dimension of $\textup{Hom}_N(\pi,\theta)$ satisfies:
$$d_\theta(\pi) \in \{ 0 , 1 \}.$$
The proof of this statement can be divided into three intermediate steps. Writing $\pi^\vee$ for the contragredient of $\pi$, it breaks down as:
\begin{enumerate}
\item[(1)] proving an inequality $d_\theta(\pi) \times d_{\theta^{-1}}(\pi^\vee) \leq 1$ for all irreducible $\pi$;
\item[(2)] showing that $d_\theta(\pi) = d_{\theta^{-1}}(\pi^\vee)$ when $\pi$ is cuspidal;
\item[(3)] using the so-called heredity property to extend the above equality to all irreducible representations. 
\end{enumerate}
Note that in \cite[Lem 2]{PrasMVW} Prasad proves (2) for all irreducible representations by reducing to the tempered case, but in all cases the arguments rely on the fact that for unitary representations one has $\pi^\vee \simeq {}^c \pi$ where ${}^c \pi$ is the complex conjugate of $\pi$. Clearly this latter argument depends on specific properties of $\C$. However, Point (3) generalizes in a rather straightforward way to any coefficient field, and so does Point (1) as long as the characteristic of $R$ is not $2$. In order to avoid this annoying restriction on the characteristic, we prefer to deduce Point (1) by lifting $(\theta^{-1} \otimes \theta)$-equivariant distributions from positive characteristic fields to characteristic zero ones. Let $I_\theta$ be the Rodier involution of $G$ defined in Section \ref{SecM1}, which in the case of $\GL_n(F)$ is the transpose inverse composed with the conjugation by the antidiagonal matrix with all entries equal to one on the second diagonal. Our method, based on the rather general lifting result from Lemma \ref{lemma lifting of equivariant forms} from residue fields to discrete valuation rings, allows us to prove in Proposition \ref{invariance_Atheta_prop} that any $(\theta^{-1} \otimes \theta)$-equivariant distributions on $G$ with values in $R$ is invariant under the Rodier anti-involution $g\mapsto I_\theta(g^{-1})$.

The main obstacle we need to overcome is point (2). Our argument is elementary and possibly well known to some experts. It is valid in the much more general setting of locally profinite groups and applies to $Z$-compact irreducible representations $\pi$ of such groups $G$ admitting a compact open subgroup of invertible pro-order in $R$. Taking $N$ a closed subgroup of $G$ exhausted by compact subgroups of invertible pro-orders in $R$ and fixing a smooth character $\theta$ of $N$, we prove that the dimension $d_\theta(\pi)$ of $\Hom_{N}(\pi,\theta)$ is finite if and only if the dimension $d_{\theta^{-1}}(\pi^\vee)$ of $\Hom_{N}(\pi^\vee,\theta^{-1})$ is finite as well, in which case:
\[d_\theta(\pi) = d_{\theta^{-1}}(\pi^\vee).\]
For quasi-split groups $G=\textbf{G}(F)$ as above, an irreducible cuspidal representation is always $Z$-compact, so point (2) is a direct consequence of our result.

We derive two main applications from our result. First, Chang Yang \cite{Yang} recently proved a conjecture of Dipendra Prasad \cite{PrasMVW} saying that, for $G$ quasi-split as above, all irreducible complex $\theta$-generic representations $\pi$ of $G$ satisy $\pi^\vee\simeq \pi \circ I_{\theta}$. The main argument in the proof  is generalizing Gelfand and Kazhdan \cite{GK} and is a consequence of the invariance of $(\theta^{-1} \otimes_R \theta)$-equivariant distibutions by $I_\theta$. In the course of generalizing Rodier's result on these distributions, we remarked that Yang's proof is valid over $R$. As a second application, we simply observe that our groups $G$ are quite general: in Section \ref{Sec Met} we replace them by Kazhdan-Patterson metaplectic covers of $\GL_n(F)$ to obtain some information about the multiplicity of their Whittaker models. Nevertheless, our Section \ref{Sec other approach} explains how a more direct approach is also fruitful in the context of $\ell$-modular representations \textit{i.e.} when $R=\Fl$. In particular we recover most of the applications above by a simple reduction argument from $\Ql$-coefficients \linebreak to $\Fl$-coefficients. The only drawback here comes from the use of Proposition \ref{prop DHKM} which is proved in the forthcoming work \cite{DHKMbanal} and relies on techniques that extend far beyond the usual representation theoretic methods as it requires some deep results of Fargues and Scholze.

\ack We thank Jean-François Dat, David Helm, Rob Kurinczuk, and Shaun Stevens for useful conversations and correspondence. Also we are greatly indebted to Guy Henniart for reading our first draft and pointing out some problems, as well as for giving us the most general case of Lemma \ref{coinvariants_regular_rep_lemma}. Finally we thank the referee for very their accurate reading, and their useful comments and suggestions, leading to the clarification of some arguments. The first named author would like to thank the CNRS and Imperial college London for giving him respectively a délégation and an ICL-CNRS felowship in 2022, and the Erwin-Schrödinger institute for funding the Research in Teams Project: l-modular Langlands Quotient Theorem and Applications. This work benefited from the hospitality of the aforementioned institutions, as well as from IMJ-PRG where the second author is grateful to have been invited.

\section{Preliminary results} \label{Sec preliminary}

Let $R$ be a commutative unitary ring. For a locally profinite group $G$, we denote by $\textup{Rep}_R(G)$ the category of smooth $R[G]$-modules. When $H$ is a closed subgroup of the locally profinite group $G$, we have the two (non-normalized) induction functors $\textup{ind}_H^G$ and $\textup{Ind}_H^G$ from $\textup{Rep}_R(H)$ to $\textup{Rep}_R(G)$. Let $\chi : H \to R^\times$ be a smooth character. 

\paragraph{Coinvariants.} The functor of $\chi$-coinvariants associates to $V \in \textup{Rep}_R(H)$ the module: 
\[V_\chi = V / V[\chi]\in \textup{Rep}_R(H)\] where: \[V[\chi] = \langle h \cdot v - \chi(h) v \ | \ h \in H \textup{ and } v \in V \rangle.\] 

Let $R'$ be a commutative $R$-algebra, and $\chi_{R'}$ be the composition of $\chi$ with the ring morphism $R \to R'$. The $\chi$-coinvariants fit into an exact sequence $0\rightarrow V [\chi] \rightarrow V \rightarrow V_\chi \rightarrow 0$ and applying the functor $- \otimes_R R'$ to it, we obtain $V[\chi] \otimes_R R' \to V \otimes_R R' \to V_\chi \otimes_R R' \to 0$ by right exactness. The image of $V[\chi] \otimes_R R' \to V \otimes_R R'$ actually is $(V \otimes_R R')[\chi_{R'}]$ so:

\begin{lemma}\label{lemme Justin}
Let $R'$ be a commutative $R$-algebra. Then the map $V \otimes_R R' \to V_\chi \otimes_R R'$ above induces a canonical identification between $(V \otimes_R R')_{\chi_{R'}}$ and $V_\chi \otimes_R R'$. \end{lemma}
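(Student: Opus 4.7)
The plan is to verify the identity of the image of $V[\chi] \otimes_R R' \to V \otimes_R R'$ with $(V \otimes_R R')[\chi_{R'}]$, as this is the only claim left implicit in the discussion preceding the lemma; once established, passing to cokernels yields the canonical identification.

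More precisely, I would start from the right-exact sequence
\[V[\chi] \otimes_R R' \xrightarrow{\alpha} V \otimes_R R' \to V_\chi \otimes_R R' \to 0,\]
obtained by applying $-\otimes_R R'$ to $0\to V[\chi]\to V\to V_\chi\to 0$. The cokernel of $\alpha$ is canonically isomorphic to $V_\chi \otimes_R R'$, so it suffices to identify the image of $\alpha$ with the submodule $(V\otimes_R R')[\chi_{R'}]$. Then the quotient $(V\otimes_R R')/\mathrm{im}(\alpha)$ on one hand equals $V_\chi \otimes_R R'$, and on the other hand equals $(V\otimes_R R')_{\chi_{R'}}$, giving the desired canonical identification.

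The identification of $\mathrm{im}(\alpha)$ with $(V\otimes_R R')[\chi_{R'}]$ is a routine check on generators. For the inclusion $\mathrm{im}(\alpha)\subseteq (V\otimes_R R')[\chi_{R'}]$, observe that $V[\chi]$ is generated as an $R$-module by elements of the form $h\cdot v - \chi(h) v$ for $h\in H$ and $v \in V$, and their images $(h\cdot v - \chi(h) v)\otimes r = h\cdot(v\otimes r) - \chi_{R'}(h)(v\otimes r)$ lie in $(V\otimes_R R')[\chi_{R'}]$. Conversely, a generator of $(V\otimes_R R')[\chi_{R'}]$ has the form $h\cdot w - \chi_{R'}(h) w$ for $w=\sum_i v_i \otimes r_i\in V\otimes_R R'$; expanding gives $\sum_i (h\cdot v_i - \chi(h) v_i)\otimes r_i$, which visibly lies in $\mathrm{im}(\alpha)$.

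There is no real obstacle: the whole argument is elementary and amounts to tracking generators through the tensor product. The only subtle point worth emphasising is that right-exactness of $-\otimes_R R'$ (as opposed to full exactness) is all that is used, so no flatness hypothesis on $R'$ over $R$ is needed, and the resulting identification is genuinely canonical in both $V$ and $R'$.
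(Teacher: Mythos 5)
Your proof is correct and follows essentially the same route as the paper: apply right-exactness of $-\otimes_R R'$ to $0\to V[\chi]\to V\to V_\chi\to 0$ and identify the image of the induced map $V[\chi]\otimes_R R'\to V\otimes_R R'$ with $(V\otimes_R R')[\chi_{R'}]$. The paper states this identification without proof (``actually is''); you supply the routine verification on generators, which is exactly what is implicitly meant.
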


The following classical fact \cite[I.4.11]{Vbook} asserts that it is exact in an important situation:

\begin{lemma}\label{lemme exact2}
When $H = N$ is exhausted by compact subgroups of invertible pro-order in $R$, the functor of $\chi$-coinvariants $V \mapsto V_\chi$ is exact. \end{lemma}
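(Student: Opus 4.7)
Since the functor $V \mapsto V_\chi$ is right exact (it is a quotient by a subfunctor, so any short exact sequence gets taken to a right exact sequence), the plan is to reduce everything to establishing left exactness. Concretely, given an injection $V_1 \hookrightarrow V_2$ in $\Rep_R(N)$, one needs to show that $V_1 \cap V_2[\chi] = V_1[\chi]$; only the inclusion $V_1 \cap V_2[\chi] \subseteq V_1[\chi]$ requires an argument.

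The main tool will be the averaging projector for a single compact open subgroup $K$ of $N$ whose pro-order is invertible in $R$. On a smooth $R[K]$-module $V$, any $v \in V$ is fixed by some open subgroup $K_0 \subseteq K$ and the index $[K:K_0]$ is invertible in $R$ by the pro-order hypothesis, so we can set
\[e_\chi^K(v) = \frac{1}{[K:K_0]} \sum_{k K_0 \in K/K_0} \chi(k)^{-1}\, k\cdot v.\]
Standard verifications show that $e_\chi^K$ is a well-defined idempotent $R$-linear endomorphism of $V$ whose image is the $\chi$-isotypic subspace $V^{K,\chi}$ and whose kernel is exactly $V[\chi|_K]$. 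Hence $V = V^{K,\chi} \oplus V[\chi|_K]$ canonically, and the coinvariants functor for $K$ alone is split exact.

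Next, I would globalize by showing that $V[\chi] = \bigcup_{K} V[\chi|_K]$, where $K$ runs over the compact open subgroups of $N$ with invertible pro-order in $R$. The inclusion $\supseteq$ is tautological. For $\subseteq$, any element of $V[\chi]$ is a finite sum $\sum_i (n_i v_i - \chi(n_i) v_i)$, and since $N$ is exhausted by such $K$, one may choose a single $K$ containing all the $n_i$, which then witnesses the element in $V[\chi|_K]$.

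To conclude, suppose $v \in V_1$ with $v \in V_2[\chi]$. By the exhaustion step, $v \in V_2[\chi|_K]$ for some compact open $K$ of invertible pro-order; equivalently, $e_\chi^K(v) = 0$ in $V_2$. Because $V_1$ is $K$-stable, the projector $e_\chi^K$ preserves $V_1$, so $e_\chi^K(v) = 0$ already inside $V_1$, giving $v \in V_1[\chi|_K] \subseteq V_1[\chi]$. This establishes left exactness. There is no serious obstacle, but the one subtle point to keep in mind is that smoothness is what makes the averaging operator land in a finite sum and that the invertible pro-order hypothesis is precisely what makes the normalizing factor $[K:K_0]^{-1}$ meaningful in $R$.
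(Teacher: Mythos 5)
Your proof is correct and follows essentially the same route as the paper, which reduces exactness to the Jacquet--Langlands description of $V[\chi]$ (the element $v$ lies in $V[\chi]$ iff $e_{\chi,K}(v)=0$ for some compact open $K\subset N$), cited there from Vignéras's book. Your version supplies the details---the idempotent decomposition $V=V^{K,\chi}\oplus V[\chi|_K]$, the exhaustion $V[\chi]=\bigcup_K V[\chi|_K]$, and the observation that $e_\chi^K$ restricts to any subrepresentation---all of which underlie the reference's statement, so this is a fleshed-out version of the same argument rather than a genuinely different one.
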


The proof of Lemma \ref{lemme exact2} is a consequence of the following description of $V[\chi]$ due to Jacquet and Langlands: a vector $v$ belongs to 
$V[\chi]$ if and only if there is a compact open subgroup $K$ of $N$ such that $e_{\chi,K}(v)=0$ where:
\[e_{\chi,K}(v)=\int_K \chi^{-1}(n) \ (n\cdot v)  \ dn.\]
From this description and Lemma \ref{lemme Justin}, we deduce a useful lifting result for $\chi$-equivariant linear forms. First we state the following preliminary general lemma, the proof of which benefited from clarifications of Guy Henniart.

\begin{lemma}\label{lemma decomposition}
Let $M$ be a module over an integral domain $\mathcal{B}$ with fraction field $\mathcal{L}$. Then it has a decomposition $M = M_1 \oplus M_2$, where $M_1$ is the maximal $\mathcal{L}$-vector space contained in $M$, and $M_2$ does not contain any $\mathcal{L}$-line.
\end{lemma}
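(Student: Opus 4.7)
The plan is to build $M_1$ as a maximal $\mathcal{B}$-submodule of $M$ that is simultaneously torsion-free and divisible, prove that any such submodule is automatically $\mathcal{B}$-injective, and then take any $\mathcal{B}$-linear complement as $M_2$.

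First I would observe that an $\mathcal{L}$-vector subspace of $M$ is the same datum as a $\mathcal{B}$-submodule $V \subseteq M$ which is torsion-free and divisible: for such a $V$, the equation $b v' = a v$ admits a unique solution $v' \in V$ for every $v \in V$ and every nonzero $b \in \mathcal{B}$ (existence by divisibility, uniqueness by torsion-freeness), and setting $(a/b) \cdot v := v'$ provides an $\mathcal{L}$-action extending the $\mathcal{B}$-action; the converse is immediate. Zorn's lemma applied to the collection of torsion-free divisible $\mathcal{B}$-submodules (clearly closed under unions of chains, since both conditions are witnessed element-by-element) then produces a maximal element, which I take as $M_1$.

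The heart of the proof is to show that $M_1$ is $\mathcal{B}$-injective, so that it admits a direct complement in $M$. By Baer's criterion, this amounts to extending any $\mathcal{B}$-linear map $\phi : I \to M_1$ defined on an ideal $I \subseteq \mathcal{B}$ to all of $\mathcal{B}$. If $I = 0$ the extension is trivial; otherwise, pick a nonzero $a \in I$, put $v := \phi(a) \in M_1$, and observe that for every $b \in I$ the identity $a\phi(b) = \phi(ab) = b \phi(a) = b v$ combined with the torsion-freeness of $M_1$ forces $\phi(b) = b \cdot (a^{-1} v)$, where $a^{-1} v$ is computed using the $\mathcal{L}$-module structure on $M_1$. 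A quick check shows that this formula is independent of the choice of $a$ and defines a $\mathcal{B}$-linear extension $\mathcal{B} \to M_1$, $b \mapsto b \cdot (a^{-1} v)$; this verifies injectivity.

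Once $M_1$ is known to be $\mathcal{B}$-injective, the inclusion $M_1 \hookrightarrow M$ splits; I set $M_2$ to be the kernel of any $\mathcal{B}$-linear retraction $M \to M_1$, so that $M = M_1 \oplus M_2$. It remains to note that if $M_2$ contained an $\mathcal{L}$-line $L$, the sum $M_1 \oplus L$ would still be torsion-free and divisible, contradicting the maximality of $M_1$. I expect the injectivity step to be the only nontrivial point; the rest of the argument is formal Zorn plus direct-sum bookkeeping.
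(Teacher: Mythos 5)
Your argument is correct and has the same skeleton as the paper's: use Zorn to produce $M_1$, show $M_1$ is injective as a $\mathcal{B}$-module so that $M_1\hookrightarrow M$ splits, and then appeal to maximality of $M_1$ to rule out a line in the complement. The one genuinely different step is the proof of injectivity. The paper proceeds abstractly, writing $\Hom_{\mathcal{B}}(-,M_1)\simeq\Hom_{\mathcal{B}}(-,\Hom_{\mathcal{L}}(\mathcal{L},M_1))\simeq\Hom_{\mathcal{L}}(-\otimes_{\mathcal{B}}\mathcal{L},M_1)$ and using that $\mathcal{L}$ is $\mathcal{B}$-flat together with the injectivity of $\mathcal{L}$-vector spaces over $\mathcal{L}$ to see the functor is exact. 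You instead invoke Baer's criterion directly and carry out the extension by hand, computing $\phi(b)=b\cdot(a^{-1}v)$ from the relation $a\phi(b)=\phi(ab)=b\phi(a)$ and torsion-freeness; this is essentially an unwinding of what the adjunction argument packages up. Your route is more elementary and self-contained (no appeal to flatness of the fraction field), while the paper's is shorter once the adjunction machinery is taken for granted. Your opening reformulation of ``$\mathcal{L}$-vector subspace'' as ``torsion-free divisible $\mathcal{B}$-submodule'' also makes the Zorn step more transparent than the paper's one-line invocation, and the closure-under-chains check is as routine as you say. Both proofs are fine; yours trades abstraction for an explicit Baer computation.
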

\begin{proof}
First the existence of $M_1$ is given by Zorn's Lemma. Then we have the chain $\Hom_{\mathcal{B}}(-,M_1)\simeq \Hom_\mathcal{B}(-,\Hom_{\mathcal{L}}(\mathcal{L},M_1))\simeq 
\Hom_\mathcal{L}(-\otimes_{\mathcal{B}}\mathcal{L}, M_1)$, where the second isomorphism is the tensor-hom adjunction whereas the first 
holds because $\Hom_{\mathcal{L}}(\mathcal{L},-)$ identifies with the identity functor on the category of $\mathcal{L}$-vector spaces. Now using the general property of fraction fields that 
$\mathcal{L}$ is $\mathcal{B}$-flat, and that $M_1$ is injective as an $\mathcal{L}$-vector space, we deduce that the functor $\Hom_{\mathcal{B}}(-,M_1)$ is exact. In particular $M_1$ is injective over $\mathcal{B}$, hence it has a complement $M_2$ inside $M$. If such a complement was to  contain a line $D$, then 
$M_1\oplus D$ would be an $\mathcal{L}$-vector space contained in $M$, contradicting the maximality of $M_1$. \end{proof}

We can now prove our lifting result. 

\begin{lemma}\label{lemma lifting of equivariant forms}
Let $\mathcal{A}$ be a complete discrete valuation ring, denote by $\mathcal{K}$ its fraction field and by $k$ its residue field. Assume that $H = N$ is exhausted by compact subgroups of invertible pro-order in $\mathcal{A}$ and let $V \in \textup{Rep}_{\mathcal{A}}(N)$ be a torsion free $\mathcal{A}$-module such that $V \otimes_\mathcal{A} \mathcal{K}$ has countable dimension. Then for all $T \in \textup{Hom}_N(V \otimes_{\mathcal{A}} k , \chi_k)$ there exists $\tilde{T} \in \textup{Hom}_N(V,\chi)$ reducing to $T$ over $k$.
\end{lemma}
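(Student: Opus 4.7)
The plan is to rephrase both Hom spaces in terms of the $\chi$-coinvariants $V_\chi$, decompose them via Lemma \ref{lemma decomposition}, and then perform a Nakayama-type argument combined with the completeness of $\mathcal{A}$.

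By the universal property of coinvariants we have $\Hom_N(V, \chi) = \Hom_\mathcal{A}(V_\chi, \mathcal{A})$, and the isomorphism of Lemma \ref{lemme Justin} identifies $\Hom_N(V \otimes_\mathcal{A} k, \chi_k)$ with $\Hom_k(V_\chi \otimes_\mathcal{A} k, k)$; the task becomes proving that the reduction map between these spaces is surjective. Apply Lemma \ref{lemma decomposition} to $V_\chi$, writing $V_\chi = M_1 \oplus M_2$. Since $M_1$ is a $\mathcal{K}$-vector space, one has $M_1 \otimes_\mathcal{A} k = 0$ and $\Hom_\mathcal{A}(M_1, \mathcal{A}) = 0$ (any $\mathcal{A}$-linear map $\mathcal{K} \to \mathcal{A}$ vanishes because $\bigcap_n \pi^n \mathcal{A} = 0$, where $\pi$ is a uniformizer). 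So one may replace the problem by lifting an arbitrary $T : M_2/\pi M_2 \to k$ to some $\tilde T : M_2 \to \mathcal{A}$, and then extend by zero on $M_1$. I would next record three key properties of $M_2$. First, $V_\chi$ is torsion-free: the Jacquet-Langlands description $V[\chi] = \bigcup_K \ker e_{\chi, K}$, combined with the $\mathcal{A}$-linearity of each projector $e_{\chi,K}$ and the torsion-freeness of $V$, shows $V[\chi]$ is $\pi$-saturated in $V$, so both $V_\chi$ and its direct summand $M_2$ are torsion-free. Second, torsion-freeness makes the condition ``$M_2$ contains no $\mathcal{K}$-line'' equivalent to $\bigcap_n \pi^n M_2 = 0$. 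Third, since $V_\chi \otimes_\mathcal{A} \mathcal{K}$ is a quotient of $V \otimes_\mathcal{A} \mathcal{K}$, the space $M_2 \otimes_\mathcal{A} \mathcal{K}$ has countable $\mathcal{K}$-dimension.

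For the main construction, observe that vectors $k$-linearly independent in $M_2/\pi M_2$ remain $\mathcal{K}$-linearly independent in $M_2 \otimes_\mathcal{A} \mathcal{K}$ (clear denominators in any relation and reduce modulo $\pi$ to derive a contradiction), so $\dim_k M_2/\pi M_2$ is at most countable. Pick a $k$-basis $\{\bar e_i\}_{i \in I}$ with $I$ countable, lift to $e_i \in M_2$, and set $M' = \bigoplus_{i \in I} \mathcal{A} e_i$, a free $\mathcal{A}$-submodule of $M_2$. The inclusion $M' \hookrightarrow M_2$ induces an isomorphism $M'/\pi^n M' \xrightarrow{\sim} M_2/\pi^n M_2$ for every $n \geq 1$: surjectivity is an iterated form of the Nakayama identity $M_2 = M' + \pi M_2$, while injectivity is an induction that, at each step, uses only the $\pi$-torsion-freeness of $M_2$ together with the base case, which says the images of the $\bar e_i$ are $k$-independent in $M_2/\pi M_2$. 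Passing to inverse limits one gets a canonical isomorphism $\widehat{M'} \simeq \widehat{M_2}$, and since $\bigcap_n \pi^n M_2 = 0$ we obtain an $\mathcal{A}$-linear embedding $M_2 \hookrightarrow \widehat{M'}$.

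To finish, pick arbitrary lifts $\tilde t_i \in \mathcal{A}$ of $T(\bar e_i) \in k$ and define $\tilde T : M' \to \mathcal{A}$ by $\tilde T(e_i) = \tilde t_i$. This map is $\mathcal{A}$-linear, hence automatically $\pi$-adically continuous, so it extends uniquely to $\tilde T : \widehat{M'} \to \mathcal{A}$ by the completeness of $\mathcal{A}$; restricting along the embedding $M_2 \hookrightarrow \widehat{M'}$ gives the required $\mathcal{A}$-linear form, whose reduction modulo $\pi$ agrees with $T$ on the basis $\{\bar e_i\}$ and hence on all of $M_2/\pi M_2$. The main obstacle is establishing the identification $\widehat{M'} \simeq \widehat{M_2}$, i.e.\ the $\pi$-adic Nakayama statement $M'/\pi^n M' \simeq M_2/\pi^n M_2$ for every $n$; this is where the torsion-freeness of $V$ and the completeness of $\mathcal{A}$ both get used in an essential way.
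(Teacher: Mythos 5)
Your proposal is correct and follows the same overall skeleton as the paper's proof: pass to $\chi$-coinvariants, apply Lemma \ref{lemma decomposition} to split $V_\chi$ into a maximal $\mathcal{K}$-subspace and a complement with no $\mathcal{K}$-line, discard the first summand (both because it dies after $\otimes_\mathcal{A} k$ and because it supports no nonzero $\mathcal{A}$-linear forms), and lift on the second. Where you diverge is the key lifting step on $M_2 = V_{\chi,2}$: the paper invokes \cite[I.9.2 \& App C.5]{Vbook} to conclude directly that $V_{\chi,2}$ is a \emph{free} $\mathcal{A}$-module (using torsion-freeness of $V_\chi$, which the paper obtains from the embedding $V_\chi \hookrightarrow V^{\mathcal{K}}_{\chi_\mathcal{K}}$ rather than from your $\pi$-saturation argument with the Jacquet--Langlands projectors $e_{\chi,K}$), after which lifting a linear form is immediate. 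You instead avoid asserting freeness of $M_2$: you build a free submodule $M' \subseteq M_2$ spanned by lifts of a $k$-basis of $M_2/\pi M_2$, show $M'/\pi^n M' \xrightarrow{\sim} M_2/\pi^n M_2$ by a Nakayama-plus-torsion-free induction, deduce $M_2 \hookrightarrow \widehat{M'}$, and lift $T$ on $M'$ then extend by $\pi$-adic continuity using completeness of $\mathcal{A}$. This is a sound and somewhat more self-contained route — it re-derives exactly the piece of the Vignéras structure theory needed here, at the cost of a longer argument; the paper's appeal to the freeness statement is shorter but outsources that content. Both get the same conclusion, and your version makes the role of completeness of $\mathcal{A}$ (which the paper's citation uses internally) explicit.
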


\begin{proof} Set $V^\mathcal{K} = V \otimes_\mathcal{A} \mathcal{K}$ and $V^k = V \otimes_\mathcal{A} k$. As $V$ is torsion free over $\mathcal{A}$, it embeds as an $\mathcal{A}$-submodule of $V^\mathcal{K}$ via the map $v \mapsto v \otimes_\mathcal{A} 1$. We claim that $V[\chi]=V \cap V^\mathcal{K}[\chi_\mathcal{K}]$: the inclusion $V[\chi]\subseteq V \cap V^\mathcal{K}[\chi_\mathcal{K}]$ is obvious, and if $v\in V \cap V^\mathcal{K}[\chi_\mathcal{K}]$ it satisfies that $e_{\chi,K}(v)= e_{\chi_\mathcal{K},K}(v)=0$ for some large enough $K$, and we conclude 
$v\in V[\chi]$. Hence the injection of $V$ into $V^\mathcal{K}$ induces an embedding $V_\chi \hookrightarrow V^\mathcal{K}_{\chi_\mathcal{K}}$. We can now apply Lemma \ref{lemma decomposition} to $M=V_{\chi}$ and $\mathcal{B}=\mathcal{A}$, giving rise to a decomposition 
$V_{\chi}=V_{\chi,1}\oplus V_{\chi,2}$. Thus the $\mathcal{A}$-module $V_{\chi,2}$ does not contain any $\mathcal{K}$-line and $V_{\chi,2} \otimes_\mathcal{A} \mathcal{K}$ is of countable dimension by hypothesis. Because $\mathcal{A}$ is principal, local and complete, we deduce from \cite[I.9.2 \& App C.5]{Vbook} that $V_{\chi,2}$ is free over $\mathcal{A}$. Moreover 
$V_{\chi}\otimes_{\mathcal{A}} k= V_{\chi,2}\otimes_{\mathcal{A}} k$ because $V_{\chi,1}\otimes_{\mathcal{A}} k=0$ and the tensor product preserves direct sums. The linear form $T'$ in $\textup{Hom}_k(V_{\chi_k}^k ,k)$ corresponding to $T$ via the canonical isomorphism $\textup{Hom}_N(V^k ,\chi_k)\simeq \textup{Hom}_k(V_{\chi_k}^k ,k)$ lifts to an element $\tilde{T}'_2\in \textup{Hom}_\mathcal{A}(V_{\chi,2} ,\mathcal{A})$ because $V_{\chi,2}$ is free over $\mathcal{A}$. However $\textup{Hom}_\mathcal{A}(V_{\chi,2} ,\mathcal{A})$ identifies with the $\mathcal{A}$-submodule of $\textup{Hom}_\mathcal{A}(V_{\chi} ,\mathcal{A})$ consisting of linear forms vanishing on $V_{\chi,1}$, so we can see $\tilde{T}'_2$ as a linear form $\tilde{T}' \in\textup{Hom}_\mathcal{A}(V_{\chi} ,\mathcal{A})$. Finally we just take $\tilde{T}\in \textup{Hom}_N(V,\chi)$ to be the equivariant linear map corresponding to $\tilde{T}'$ via the canonical identification $\textup{Hom}_N(V,\chi)\simeq \textup{Hom}_\mathcal{A}(V_{\chi} ,\mathcal{A})$. \end{proof}

\paragraph{Coinvariants of the regular representation.}  We denote by $C_c^\infty(G)$ the space of locally constant and compactly supported functions on $G$ with values in $R$. It is a smooth $(G\times G)$-module for the action $((\l\otimes \rho) (g,g')f)(x)=f(g^{-1}xg')$. Let $H$ be a \textit{unimodular} closed subgroup of $G$ containing an open compact subgroup of invertible pro-order in $R$, so that there exists a right invariant Haar measure $\mu$ on $H$ with values in $R$. In general all our Haar measures are assumed to be normalized on a compact open subgroup. Let $\chi : H \to R^\times$ be a smooth character. 

\begin{lemma} \label{coinvariants_regular_rep_lemma} 
The map:
\[\begin{array}{cccc} p_{H,\chi} :& C_c^\infty(G) & \rightarrow & \textup{ind}_H^G(\chi) \\
 & f & \mapsto & p_{H,\chi}f \end{array} \textup{ where } p_{H,\chi}f(g) = \int_H \chi^{-1}(h) f(h g) dh\]
defines a surjective $(H\times G)$-equivariant map from $C_c^\infty(G)$ to $\textup{ind}_H^G(\chi)$. 
Furthemore, this morphism factors through an $(H\times G)$-equivariant isomorphism:
$$C_c^\infty(G)_{\lambda,\chi^{-1}}\simeq \textup{ind}_H^G(\chi)$$
where the index indicates that the coinvariants are taken with respect to the action of $\lambda$. \end{lemma}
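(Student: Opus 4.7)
The plan is to verify that $p_{H,\chi}$ is well-defined and $(H\times G)$-equivariant, factors through the $(\lambda,\chi^{-1})$-coinvariants, and then to analyze surjectivity and injectivity via a double-coset decomposition $H\backslash G/K$. For fixed $g$, the integrand $h\mapsto \chi^{-1}(h) f(hg)$ is locally constant and compactly supported in $H$ since $f\in C_c^\infty(G)$, so the defining integral makes sense. Right $G$-equivariance is immediate, and the identity $p_{H,\chi}f(h_0 g)=\chi(h_0) p_{H,\chi}f(g)$ follows from the substitution $h\mapsto h h_0^{-1}$ using right-invariance of $\mu$. The inclusion $\textup{supp}(p_{H,\chi}f)\subseteq H\cdot\textup{supp}(f)$ gives compact support modulo $H$, so $p_{H,\chi}f\in \textup{ind}_H^G(\chi)$. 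Using unimodularity of $H$ to substitute $h\mapsto h h_0$ shows $p_{H,\chi}(\lambda(h_0) f)=\chi^{-1}(h_0) p_{H,\chi}f$, so the map factors through $C_c^\infty(G)_{\lambda,\chi^{-1}}$.

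For surjectivity, given $\phi\in \textup{ind}_H^G(\chi)$, smoothness and compact support modulo $H$ let me write $\phi$ as a finite sum of pieces, each supported on a single double coset $HgK$ for a small compact open $K\subseteq G$. Shrinking $K$, I may assume $H\cap K$ has invertible pro-order in $R$ and that $\chi$ is trivial on $H_g:=H\cap gKg^{-1}$; then any $\phi$ supported on $HgK$ is a scalar multiple of the function $\phi_g$ determined by $\phi_g(hgk)=\chi(h)$. A direct computation, using that conjugation preserves Haar measure on the unimodular group $H$, gives $p_{H,\chi}(\mathbf{1}_{gK})=\vol(H_g)\,\phi_g$, and since $\vol(H_g)$ is invertible in $R$, $\phi_g$ lies in the image.

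For injectivity of the induced map $\bar p: C_c^\infty(G)_{\lambda,\chi^{-1}}\to \textup{ind}_H^G(\chi)$, suppose $p_{H,\chi}f=0$ with $f$ right $K$-invariant, and decompose $f=\sum_\alpha f_\alpha$ over the double cosets $H\backslash G/K$; each piece satisfies $p_{H,\chi}f_\alpha=0$ and expands as $f_\alpha=\sum_h c_h \mathbf{1}_{hg_\alpha K}$ for $h$ ranging over representatives of $H/H_{g_\alpha}$. When $\chi$ is trivial on $H_{g_\alpha}$, the surjectivity computation forces $\sum_h c_h\chi^{-1}(h)=0$, and modulo the coinvariant relations one has $\mathbf{1}_{hg_\alpha K}=\lambda(h)\mathbf{1}_{g_\alpha K}\equiv \chi^{-1}(h)\mathbf{1}_{g_\alpha K}$, so $[f_\alpha]=0$. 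When $\chi|_{H_{g_\alpha}}$ is non-trivial, I invoke the Jacquet-Langlands description recalled after Lemma \ref{lemme exact2}: taking $K'=H_{g_\alpha}$ (a compact open subgroup of $H$ of invertible pro-order) and noting $\lambda(h)\mathbf{1}_{g_\alpha K}=\mathbf{1}_{g_\alpha K}$ for $h\in H_{g_\alpha}$, one gets $e_{\chi^{-1},K'}(\mathbf{1}_{g_\alpha K})=\left(\int_{H_{g_\alpha}}\chi(h)\,dh\right)\mathbf{1}_{g_\alpha K}$, and this integral vanishes by orthogonality of the non-trivial character $\chi|_{H_{g_\alpha}}$ on the compact group $H_{g_\alpha}$ of invertible pro-order (via Maschke for the finite quotient on which $\chi$ factors). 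Thus $[\mathbf{1}_{g_\alpha K}]=0$ and hence $[f_\alpha]=0$.

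The main obstacle is this second case of the injectivity argument: since $R$ is only assumed a commutative unitary ring, the element $1-\chi(h_0)$ need not be invertible or even a non-zero-divisor, so purely algebraic manipulations of the coinvariant relations do not suffice, and one must combine the integral description of coinvariants with character orthogonality on compact groups of invertible pro-order in $R$.
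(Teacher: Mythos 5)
Your route is genuinely different from the paper's: the paper simply cites \cite[App.\ B.1]{AKMSS} for the first part and then, following an observation of Henniart, notes that the computation in \cite[Lem.\ B.4(i)]{AKMSS} works with an arbitrary module-valued $R$-linear map in place of a linear form; choosing the target $M=C_c^\infty(G)_{\lambda,\chi^{-1}}$ identifies $\ker p_{H,\chi}$ with $C_c^\infty(G)_\lambda[\chi^{-1}]$ at once. You instead unwind everything through an explicit $H\backslash G/K$ analysis. Your surjectivity argument and Case~A of injectivity are essentially sound (with the usual care about shrinking $K$ so that $H_g\subseteq$ a reference subgroup of invertible pro-order, so that $\vol(H_g)$ is a unit).

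Case~B of the injectivity argument, however, has a genuine gap. You want $[\mathbf{1}_{g_\alpha K}]=0$ from the claim that $\int_{H_{g_\alpha}}\chi\,dh=0$ when $\chi|_{H_{g_\alpha}}$ is non-trivial, invoking ``orthogonality/Maschke.'' Over a general commutative ring this is false: take $R=\Z[1/2]\times\Z[1/2]$ and $\chi$ factoring through a quotient $\Z/2\Z$ of $H_{g_\alpha}$ with $\chi(\sigma)=(1,-1)$; then $1+\chi(\sigma)=(2,0)\neq 0$, even though $2$ is invertible. (The relation $(1-\chi(\sigma))\bigl(1+\chi(\sigma)\bigr)=0$ only says the sum is killed by a zero-divisor; the Maschke idempotent $\tfrac1m\sum_x x$ maps under $\chi$ to an idempotent of $R$, not necessarily $0$.) In this situation one checks via Jacquet--Langlands that $e_{\chi^{-1},K''}(\mathbf{1}_{g_\alpha K})=\bigl(\int_{H_{g_\alpha}}\chi\bigr)\sum_i\chi(h_i)\mathbf{1}_{h_ig_\alpha K}$ for every $K''\supseteq H_{g_\alpha}$, and this never vanishes, so in fact $[\mathbf{1}_{g_\alpha K}]\neq 0$. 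The Lemma is nevertheless true, because $p_{H,\chi}f_\alpha=0$ forces each coefficient $c_h$ to annihilate $\int_{H_{g_\alpha}}\chi$, which is precisely what makes $c_h[\mathbf{1}_{hg_\alpha K}]=0$; but this is a statement about $f_\alpha$, not about $\mathbf{1}_{g_\alpha K}$ on its own, and your argument proves the latter. The cleanest repair is the one you already use for surjectivity: shrink $K$ (to a normal compact open subgroup of itself, say) so that $\chi$ is trivial on every $H_{g_\alpha}$ meeting $\operatorname{supp}(f)$, making Case~B vacuous.
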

\begin{proof}
The first part is proved in \cite[App B.1]{AKMSS}, and when $R$ is a field the second part also follows easily from \cite[App B.1]{AKMSS} using the fact that linear forms separate points of a vector space. However the following short argument works in full generality. It was given to us by Guy Henniart. First observe that in the proof of \cite[Lem B.4 (i)]{AKMSS}, the linear form $\mathscr{L}$ can be replaced by any $R$-linear map with values in an $R$-module $M$. Taking $M = C_c^\infty(G)_{\lambda,\chi^{-1}}$ shows that the kernel of $p_{H,\chi}$ is equal to $C_c^\infty(G)_{\l}[\chi]$ and the result follows. 
\end{proof}

As an immediate consequence of this result we deduce the second part of the following corollary, the first one being the integration in stages formula:

\begin{corollary} \label{factorizing_coinvaiants_cor} If $H'$ is a closed subgroup of $H$, together with their respective Haar measures, there exists a (unique) Haar measure on $H' \backslash H$, such that the map $p_{H,\chi}:C_c^\infty(G) \twoheadrightarrow \textup{ind}_H^G(\chi)$ factors through:
$$\begin{array}{cccc} q_{H,H'} :& \textup{ind}_{H'}^G( \chi) &  \twoheadrightarrow & \textup{ind}_H^G(\chi) \\
 & f & \mapsto & \displaystyle \int_{H'\backslash H} \chi^{-1}(h) p_{H,\chi}f(h g) d h \end{array}.$$
In particular $p_{H,\chi}= q_{H,H'} \circ p_{H',\chi}$ and $q_{H,H'}$ is $(H' \times G)$-equivariant. Moreover if $H'$ is normal in $G$, then $\textup{ind}_{H'}^G( \chi)$ is naturally a left $H$-module and the morphism $q_{H,H'}$ induces an $(H \times G)$-isomorphism $\textup{ind}_{H'}^G(\chi)_{\lambda,\chi^{-1}} \simeq \textup{ind}_H^G(\chi)$.
  \end{corollary}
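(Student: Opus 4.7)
The plan is to handle the three assertions in order.

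The first assertion, on the existence of a compatible Haar measure on $H' \backslash H$, is the classical integration-in-stages statement. Assuming $H'$ shares with $H$ the unimodularity and the existence of a compact open subgroup of invertible pro-order in $R$, there is a unique right $H$-invariant $R$-valued measure on $H' \backslash H$ such that
\[
\int_H \phi(h) \, dh = \int_{H' \backslash H} \int_{H'} \phi(h'h) \, dh' \, d\bar{h}
\]
for every $\phi \in C_c^\infty(H)$.

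For the second assertion, I would first check that for $f \in \ind_{H'}^G(\chi)$ the integrand $\chi^{-1}(h) f(hg)$ descends to a function on $H' \backslash H$: substituting $h'_0 h$ for $h$ with $h'_0 \in H'$ yields $\chi^{-1}(h'_0) \chi^{-1}(h) \chi(h'_0) f(hg) = \chi^{-1}(h) f(hg)$, so the formula defining $q_{H,H'}(f)$ makes sense. A direct computation using the integration-in-stages formula then gives $q_{H,H'} \circ p_{H',\chi} = p_{H,\chi}$:
\[
q_{H,H'}(p_{H',\chi}F)(g) = \int_{H'\backslash H} \int_{H'} \chi^{-1}(h'h) F(h'hg) \, dh' \, d\bar{h} = \int_H \chi^{-1}(h) F(hg) \, dh = p_{H,\chi}F(g).
\]
Since $p_{H',\chi}$ is surjective by Lemma~\ref{coinvariants_regular_rep_lemma}, this identity uniquely determines $q_{H,H'}$, forces its surjectivity (as $p_{H,\chi}$ is itself surjective), and transports $(H' \times G)$-equivariance from $p_{H,\chi}$ and $p_{H',\chi}$ to $q_{H,H'}$.

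For the third assertion, when $H' \triangleleft G$, left $\lambda$-translation by $H$ descends to a natural left $H$-action on $\ind_{H'}^G(\chi)$ thanks to the normality of $H'$. The same computation then upgrades $q_{H,H'}$ to an $(H \times G)$-equivariant morphism, and since $\lambda(h)$ acts on $\ind_H^G(\chi)$ simply as the scalar $\chi^{-1}(h)$, the map $q_{H,H'}$ factors through the coinvariants $\ind_{H'}^G(\chi)_{\lambda,\chi^{-1}}$. To see that the induced map is an isomorphism, both sides are to be exhibited as the same quotient of $C_c^\infty(G)$: Lemma~\ref{coinvariants_regular_rep_lemma} applied to $H$ identifies $\ind_H^G(\chi)$ with $C_c^\infty(G)_{\lambda,\chi^{-1}|_H}$, while for the source one first identifies $\ind_{H'}^G(\chi) \simeq C_c^\infty(G)_{\lambda,\chi^{-1}|_{H'}}$ via the same lemma applied to $H'$, and then checks that taking further $(\lambda,\chi^{-1})$-coinvariants for $H$ yields the same quotient $C_c^\infty(G)_{\lambda,\chi^{-1}|_H}$. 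I expect the main subtlety to be the justification of the natural left $H$-action on $\ind_{H'}^G(\chi)$ and the identification of these iterated coinvariants, both of which hinge on the Jacquet–Langlands characterisation of coinvariants recalled in the discussion preceding Lemma~\ref{lemma lifting of equivariant forms}.
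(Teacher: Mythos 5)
Your proposal is correct and follows essentially the route the paper intends: the paper dispatches the corollary in a single line, attributing the first half to integration in stages and the second half to being ``an immediate consequence'' of Lemma~\ref{coinvariants_regular_rep_lemma}, and you fill in exactly those two ingredients (the Fubini computation giving $p_{H,\chi}=q_{H,H'}\circ p_{H',\chi}$, then the double application of Lemma~\ref{coinvariants_regular_rep_lemma} and the transitivity of iterated $\chi^{-1}$-coinvariants). One small remark: the identification of iterated coinvariants in the last step is purely formal (the third isomorphism theorem), and the Jacquet--Langlands description of $V[\chi]$ is not actually needed there; what you do need to note explicitly is that, when $H'\triangleleft H$, the isomorphism $C_c^\infty(G)_{\lambda,\chi^{-1}|_{H'}}\simeq\ind_{H'}^G(\chi)$ coming from $p_{H',\chi}$ is $(H\times G)$-equivariant (this is a change of variable $h'\mapsto h h' h^{-1}$ using that $\chi$ is a character of all of $H$), which is what lets the remaining $H$-coinvariants on the two sides match up.
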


\section{Duality and Whittaker functionals}

Let $G$ be a locally profinite group with center $Z$. Let $R$ be an algebraiclly closed field such that $G$ has an open compact subgroup of invertible pro-order in $R$. We recall that an irreducible $\pi$ in $\textup{Rep}_R(G)$ is said to be $Z$-compact if all of its matrix coefficients have compact support modulo $Z$. Such representations are admissible and admit central characters \linebreak \cite[I.7.11 Prop]{Vbook}. In particular the contragredient $\pi^\vee$ of an irreducible $Z$-compact representation $\pi$ is itself $Z$-compact and irreducible. Denoting by $\omega_\pi$ the central character of $\pi$, we have a $(G\times G)$-morphism for the action induced by $\lambda\otimes \rho$ on $\ind_Z^G(\omega_\pi)$:
$$\pi^\vee \otimes_R \pi \lhook\joinrel\xrightarrow{\hspace{0.35cm} c \hspace{0.35cm}} \textup{ind}_Z^G(\omega_\pi)$$
where the image $c(v^\vee \otimes_R v)$ is the coefficient $g \mapsto v^\vee(\pi(g) v)$ of $\pi$. The representation on the left-hand side is an irreducible $(G\times G)$-module by a straightforward generalization of \cite[Th 1]{flath} to our setting, so this arrow is indeed injective. 

\begin{prop}\label{prop main} Let $\pi$ be an irreducible $Z$-compact representation in $\textup{Rep}_R(G)$ and $\theta$ be a character of a subgroup $N$ exhausted by compact subgroups of invertible pro-orders in $R$. Assume that $\omega_\pi$ agrees with $\theta$ on $Z \cap N$ so that $[\omega_\pi \theta] ( zn ) = \omega_\pi(z) \theta(n)$ is a character of~$ZN$. Then the coefficient map above induces an $(N \times G)$-morphim:
\[(\pi^\vee)_{\theta^{-1}} \otimes_R \pi \lhook\joinrel\xrightarrow{\hspace{0.25cm} c_{\theta^{-1}} \hspace{0.05cm}} \textup{ind}_{ZN}^G ([\omega_\pi \theta]).\] \end{prop}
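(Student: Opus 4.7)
The plan is to obtain $c_{\theta^{-1}}$ by applying the functor of $N$-coinvariants with character $\theta^{-1}$ to the injective coefficient map
\[c : \pi^\vee \otimes_R \pi \lhook\joinrel\xrightarrow{} \textup{ind}_Z^G(\omega_\pi),\]
where $n \in N$ acts on the domain through $\pi^\vee$ on the first factor (trivially on $\pi$) and on the codomain through left translation $\lambda$. First I would check that $c$ is equivariant for these actions: a direct computation gives $c(\pi^\vee(n) v^\vee \otimes v)(g) = v^\vee(\pi(n^{-1} g) v) = \lambda(n)(c(v^\vee \otimes v))(g)$. Since $N$ is closed in $G$ and exhausted by compact subgroups of invertible pro-order in $R$, Lemma \ref{lemme exact2} ensures that the $\theta^{-1}$-coinvariants functor is exact, so the induced map on coinvariants will still be injective.

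The next task is to identify the coinvariants on each side. On the domain, since $N$ acts only on the first factor and $R$ is a field, the $\theta^{-1}$-coinvariants commute with the tensor product, giving $(\pi^\vee \otimes_R \pi)_{\theta^{-1}} = (\pi^\vee)_{\theta^{-1}} \otimes_R \pi$, which follows directly from inspecting the generators of $(\pi^\vee \otimes_R \pi)[\theta^{-1}]$. On the codomain, I will invoke Corollary \ref{factorizing_coinvaiants_cor} with $H = ZN$, $H' = Z$ (normal in $G$ as the center), and $\chi = [\omega_\pi \theta]$, producing a canonical $(H \times G)$-isomorphism
\[\textup{ind}_Z^G(\omega_\pi)_{\lambda, [\omega_\pi \theta]^{-1}} \simeq \textup{ind}_{ZN}^G([\omega_\pi \theta]).\]

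The main subtlety, and the only step that requires care, is that the coinvariants on the left of this isomorphism are taken over all of $H = ZN$, whereas I only wish to quotient by the action of $N$ with character $\theta^{-1}$. This is resolved by noting that $z \in Z$ already acts on $\textup{ind}_Z^G(\omega_\pi)$ via $\lambda$ by the scalar $\omega_\pi(z)^{-1} = [\omega_\pi \theta]^{-1}(z)$, so the $Z$-contribution to the relevant subspace of relations is trivial and the $(ZN, [\omega_\pi \theta]^{-1})$-coinvariants coincide with the $(N, \theta^{-1})$-coinvariants. Assembling these identifications together with the injectivity preserved by exactness yields the desired $(N \times G)$-injection
\[c_{\theta^{-1}} : (\pi^\vee)_{\theta^{-1}} \otimes_R \pi \lhook\joinrel\xrightarrow{} \textup{ind}_{ZN}^G([\omega_\pi \theta]).\]
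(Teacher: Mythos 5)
Your proposal is correct and follows essentially the same route as the paper: apply $\theta^{-1}$-coinvariants (for the $\lambda$-action) to the injective coefficient map $c$, use Corollary~\ref{factorizing_coinvaiants_cor} with $H'=Z$, $H=ZN$ to identify the coinvariants of $\textup{ind}_Z^G(\omega_\pi)$ with $\textup{ind}_{ZN}^G([\omega_\pi\theta])$, and invoke Lemma~\ref{lemme exact2} for exactness to preserve injectivity. You spell out details the paper leaves implicit — the equivariance computation, the commutation of coinvariants with $\otimes_R\pi$ over a field, and why the $(ZN,[\omega_\pi\theta]^{-1})$-coinvariants coincide with the $(N,\theta^{-1})$-coinvariants since $Z$ already acts by $\omega_\pi^{-1}$ — but the underlying argument is the same.
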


\begin{proof} Note that $Z$ is a normal subgroup of $ZN$ and that both groups are unimodular. As a result of Corollary \ref{factorizing_coinvaiants_cor}, we have a surjective $(ZN \times G)$-morphism
$\textup{ind}_Z^G(\omega_\pi) \twoheadrightarrow \textup{ind}_{ZN}^G([\omega_\pi \theta])$ identifying with $\theta^{-1}$-coinvariants for $\lambda$. By Lemma \ref{lemme exact2}, the $\theta$-coinvariants are exact so we have an injection $(\pi^\vee)_{\theta^{-1}} \otimes_R \pi \hookrightarrow \textup{ind}_{ZN}^G([\omega_\pi \theta])$. \end{proof}

\begin{rem}
When $G$ is reductive, $N$ is the unipotent radical of a minimal parabolic subgroup of $G$, and $\theta$ is non-degenerate, the inclusion 
$\pi\hookrightarrow \textup{ind}_{ZN}^G ([\omega_\pi \theta])$ when $\pi$ is cuspidal is proved for complex representations in \cite[Cor 6.5]{CS}, using a different method. In this situation, a converse statement also holds thanks to \cite[Th 10]{DelormePWi}.
\end{rem}

Our main result on Whittaker multiplicities is a simple consequence of this result. Note that $\dim(\pi_\theta)$ and $\dim(\Hom_N(\pi,\theta))$ are equal when one of the two is finite. Set:
\[d_\theta(\pi) := \dim(\Hom_N(\pi,\theta)).\]

\begin{thm}\label{multiplicities_main_thm} Let $\pi \in \textup{Rep}_R(G)$ be irreducible $Z$-compact. Then $d_{\theta^{-1}}(\pi^\vee)$ is finite if and only if $d_{\theta}(\pi)$ is finite as well, in which case: 
$$d_{\theta^{-1}}(\pi^\vee) = d_\theta(\pi).$$\end{thm}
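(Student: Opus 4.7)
The theorem should follow fairly directly from Proposition \ref{prop main} together with Frobenius reciprocity, used twice in a symmetric fashion. The starting point is the $(N \times G)$-equivariant injection
$$c_{\theta^{-1}} : (\pi^\vee)_{\theta^{-1}} \otimes_R \pi \hookrightarrow \textup{ind}_{ZN}^G([\omega_\pi \theta]).$$
For each fixed $\bar{v}^\vee \in (\pi^\vee)_{\theta^{-1}}$, the assignment $v \mapsto c_{\theta^{-1}}(\bar{v}^\vee \otimes v)$ defines a $G$-morphism $\phi_{\bar{v}^\vee} \in \Hom_G(\pi, \textup{ind}_{ZN}^G([\omega_\pi \theta]))$, thanks to $G$-equivariance on the second factor. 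The resulting $R$-linear map $\bar{v}^\vee \mapsto \phi_{\bar{v}^\vee}$ is injective: if $\phi_{\bar{v}^\vee}=0$, then $c_{\theta^{-1}}(\bar{v}^\vee \otimes v)=0$ for every $v \in \pi$, and the injectivity of $c_{\theta^{-1}}$ combined with $\pi \neq 0$ forces $\bar{v}^\vee = 0$.

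Next I would compose with the natural inclusion $\textup{ind}_{ZN}^G([\omega_\pi \theta]) \subseteq \textup{Ind}_{ZN}^G([\omega_\pi \theta])$ and invoke Frobenius reciprocity, using that $\pi$ has central character $\omega_\pi$ to pass from $ZN$- to $N$-equivariant morphisms:
$$\Hom_G\bigl(\pi, \textup{Ind}_{ZN}^G([\omega_\pi \theta])\bigr) \cong \Hom_{ZN}\bigl(\pi, [\omega_\pi \theta]\bigr) \cong \Hom_N(\pi, \theta).$$
Putting these maps together produces an injection $(\pi^\vee)_{\theta^{-1}} \hookrightarrow \Hom_N(\pi, \theta)$, hence the inequality $d_{\theta^{-1}}(\pi^\vee) \leq d_\theta(\pi)$. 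In particular, finiteness of $d_\theta(\pi)$ implies finiteness of $d_{\theta^{-1}}(\pi^\vee)$.

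For the reverse inequality, I would run exactly the same argument with $\pi$ replaced by $\pi^\vee$ and $\theta$ replaced by $\theta^{-1}$. This is legitimate because $\pi^\vee$ is itself irreducible and $Z$-compact (as observed just before Proposition \ref{prop main}), with central character $\omega_\pi^{-1}$ which agrees with $\theta^{-1}$ on $Z \cap N$, and because $Z$-compactness ensures admissibility, so that $(\pi^\vee)^\vee \simeq \pi$. One thus obtains an injection $\pi_\theta \hookrightarrow \Hom_N(\pi^\vee, \theta^{-1})$, i.e.\ the inequality $d_\theta(\pi) \leq d_{\theta^{-1}}(\pi^\vee)$. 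Combining both inequalities gives the claimed equivalence of finiteness and equality of dimensions.

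The whole argument is elementary once Proposition \ref{prop main} is in hand; the only place where one has to be slightly careful, rather than facing a real obstacle, is in the Frobenius reciprocity step, where one needs to keep track of both the $N$- and $Z$-actions, and in the symmetry step, where one invokes biduality for admissible representations. Both of these are ensured by the $Z$-compactness hypothesis on $\pi$.
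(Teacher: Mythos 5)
Your proposal is correct and takes essentially the same route as the paper: both arguments convert the injection of Proposition \ref{prop main} into an embedding of $(\pi^\vee)_{\theta^{-1}}$ into a Whittaker space via Frobenius reciprocity, and then obtain the reverse inequality by applying the same reasoning to $\pi^\vee$, using admissibility to get $(\pi^\vee)^\vee \simeq \pi$. The only cosmetic difference is that you pass through $\Hom_{ZN}(\pi,[\omega_\pi\theta])$ and then drop to $\Hom_N(\pi,\theta)$, whereas the paper directly compares $\ind_{ZN}^G([\omega_\pi\theta]) \subseteq \Ind_N^G(\theta)$ — the two formulations of Frobenius reciprocity are equivalent here because $\pi$ has central character $\omega_\pi$.
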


\begin{proof} The vector space $\textup{Hom}_G(\pi,\textup{Ind}_N^G(\theta))$ has dimension $d_{\theta}(\pi)$ by Frobenius reciprocity, i.e. $d_\theta(\pi)=\dim(\pi_\theta)$, but as it contains $\textup{Hom}_G(\pi,\textup{ind}_{ZN}^G ([\omega_\pi \theta]))$ its dimension is at least $\dim((\pi^\vee)_{\theta^{-1}})$ thanks to Proposition \ref{prop main}. So $d_\theta(\pi) \geq \dim((\pi^\vee)_{\theta^{-1}})$. Now a similar reverse inequality holds because $\pi$ is admissible so $(\pi^\vee)^\vee \simeq \pi$. It gives $d_{\theta^{-1}}(\pi^\vee) \geq \dim(\pi_\theta)$, where by definition  $d_{\theta^{-1}}(\pi^\vee):= \dim(\Hom_N(\pi^\vee,\theta^{-1}))=\dim((\pi^\vee)_{\theta^{-1}})$. \end{proof}

As a reminder of the proof of Propisition \ref{prop main}, we sketch the commutative diagram:
\[\xymatrixcolsep{5pc}\xymatrix{
\pi^\vee \otimes_R \pi \ar@{->>}[d] \ar@{^{(}->}[r]^{c} & \textup{ind}_Z^G(\omega_\pi) \ar@{->>}[d]  & \\
\pi^\vee_{\theta^{-1}} \otimes_R \pi  \ar@{^{(}->}[r]^{c_{\theta^{-1}}} & \textup{ind}_{ZN}^G([\omega_\pi \theta]) \ar@{^{(}->}[r] & 
\textup{Ind}_N^G( \theta) }\]
where according to Corollary \ref{factorizing_coinvaiants_cor} the composition:
\[\pi^\vee \otimes_R \pi \longrightarrow \textup{ind}_{ZN}^G([\omega_\pi \theta])\]
actually is the map associating to $v^\vee \otimes v$ the function:
\[W_{v^\vee,v}:g \mapsto \int_{Z \backslash ZN} \theta^{-1}(n) c_{v^\vee,v}(n g) dn.\] Suppose moreover that $d_\theta(\pi)$ is finite, then Theorem 3.2 implies that the image of the composition of the two bottom arrows is the $\pi$-isotypic component of $\textup{Ind}_{N}^G(\theta)$. Hence:
\[\Hom_G(\pi,\Ind_N^G(\theta))=\Hom_G(\pi,\ind_{ZN}^G([\omega_\pi \theta])=\{v\mapsto W_{v^\vee,v}, v^\vee \in V^\vee\}.\]

Define for $v^\vee\in \pi^\vee$ the Whittaker period $W_{v^\vee} \in \textup{Hom}_N(\pi,\theta)$ on $\pi$ attached to $v^\vee$: 
$$W_{v^\vee} : v \mapsto \int_{Z \backslash ZN} \theta^{-1}(n) c_{v^\vee,v}(n) dn.$$

A consquence of the above discussion and of Frobenius reciprocity is the following.

\begin{corollary}\label{cor main} Let $\pi$ be an irreducible $Z$-compact admissible representation of $G$. Suppose that $d_\theta(\pi)$ is finite. Then any Whittaker model of $\pi$ lies inside $\textup{ind}_{ZN}^G([\omega_\pi \theta])$ and moreover the Whittaker space of $\pi$ is equal to that of its Whittaker periods:
\[\Hom_N(\pi,\theta)=\{W_{v^\vee} \ | \  \ v^\vee \in \pi^\vee\}.\]
\end{corollary}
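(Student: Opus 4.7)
The plan is to combine Frobenius reciprocity with Theorem \ref{multiplicities_main_thm} and the commutative diagram displayed just above the corollary, and then carry out a dimension count to identify $\Hom_N(\pi,\theta)$ with the space of Whittaker periods.

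First I would record the obvious inclusion $\ind_{ZN}^G([\omega_\pi\theta]) \hookrightarrow \Ind_N^G(\theta)$, which induces an injection
\[\Hom_G(\pi,\ind_{ZN}^G([\omega_\pi\theta])) \hookrightarrow \Hom_G(\pi,\Ind_N^G(\theta)),\]
where the right-hand side has dimension $d_\theta(\pi)$ by Frobenius reciprocity. Next, the commutative diagram preceding the corollary shows that for each $v^\vee \in \pi^\vee$ the map $v \mapsto W_{v^\vee,v}$ is a $G$-equivariant map from $\pi$ to $\ind_{ZN}^G([\omega_\pi\theta])$, and it vanishes if $v^\vee \in \pi^\vee[\theta^{-1}]$. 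Hence it defines a linear map
\[\Phi : (\pi^\vee)_{\theta^{-1}} \longrightarrow \Hom_G(\pi,\ind_{ZN}^G([\omega_\pi\theta])),\quad \overline{v^\vee} \longmapsto \bigl(v \mapsto W_{v^\vee,v}\bigr).\]

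I would then prove that $\Phi$ is injective, using the injectivity of $c_{\theta^{-1}}$ from Proposition \ref{prop main}: if $\overline{v^\vee}$ is a nonzero class in $(\pi^\vee)_{\theta^{-1}}$ and $v\in\pi$ is nonzero, then $\overline{v^\vee}\otimes v$ is nonzero in $(\pi^\vee)_{\theta^{-1}}\otimes_R \pi$ since $R$ is a field, so $W_{v^\vee,v} = c_{\theta^{-1}}(\overline{v^\vee}\otimes v)$ is nonzero. This yields a chain of injections between finite-dimensional vector spaces
\[(\pi^\vee)_{\theta^{-1}} \stackrel{\Phi}{\hookrightarrow} \Hom_G(\pi,\ind_{ZN}^G([\omega_\pi\theta])) \hookrightarrow \Hom_G(\pi,\Ind_N^G(\theta)) \simeq \Hom_N(\pi,\theta).\]

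To conclude, I would invoke Theorem \ref{multiplicities_main_thm}, which gives $\dim (\pi^\vee)_{\theta^{-1}} = d_{\theta^{-1}}(\pi^\vee) = d_\theta(\pi)$, matching the dimension of the rightmost space. Thus all the arrows in the chain above are isomorphisms: every element of $\Hom_G(\pi,\Ind_N^G(\theta))$ already takes values in $\ind_{ZN}^G([\omega_\pi\theta])$ and is of the form $v\mapsto W_{v^\vee,v}$ for some $v^\vee\in\pi^\vee$. Transferring back through Frobenius reciprocity yields both assertions of the corollary. There is no substantial obstacle here beyond Theorem \ref{multiplicities_main_thm}; the only subtlety is the injectivity of $\Phi$, which reduces cleanly to the injectivity of $c_{\theta^{-1}}$ from Proposition \ref{prop main}.
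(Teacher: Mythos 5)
Your proof is correct and follows essentially the same route as the paper: you combine the injectivity of $c_{\theta^{-1}}$ from Proposition \ref{prop main}, the dimension equality $d_{\theta^{-1}}(\pi^\vee)=d_\theta(\pi)$ from Theorem \ref{multiplicities_main_thm}, and Frobenius reciprocity to force all the inclusions to be equalities. The paper phrases the same dimension count in terms of the $\pi$-isotypic component of $\Ind_N^G(\theta)$ rather than your explicit chain of injections of Hom-spaces, but these are two renderings of the same argument.
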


\section{Multiplicity at most one for Whittaker functionals}\label{SecM1}

Now let $F$ be a non Archimedean local field of residual characteristic $p$ and $G$ be the group of $F$-points of a quasisplit reductive group over $F$. We moreover suppose that $R$ is algebraically closed of characteristic $\ell\neq p$. We fix a maximal $F$-split torus $T$ of $G$, as well as a Borel subgroup $B$ containing $T$ with unipotent radical $N$. This in particular fixes a set of simple roots $\Delta$ of $T$ and a longest Weyl element $w_0$ in the Weyl group $W(G,T)$. We fix a non-trivial character $\psi:F\rightarrow R$, as well as a non-degenerate character $\theta:N\rightarrow F$. This means that for all $\alpha\in \Delta$ the restriction of $\theta$ to the root subgroup of $\alpha$ is non-trivial. Let $N^{\textup{ab}}$ be the abelianization of $N$. We have a canonical isomorphism $\Hom(N,R^\times)\simeq \Hom(N^{\textup{ab}},R^\times)$ for smooth characters, which we denote by $\mu \mapsto \overline{\mu}$. Now by \cite{BHderived} we can endow $N^{\textup{ab}}$ with a structure of finite dimensional $F$-vector space, with basis $\Delta$. Hence by Pontryagin duality the map $\overline{\Phi}\mapsto \psi\circ \overline{\Phi}$ gives an isomorphism $\Hom_F(N^{\textup{ab}},F)\simeq \Hom(N^{\textup{ab}},R^\times)$. In particular $\overline{\theta}=\psi\circ \overline{\Theta}$ for a unique $\overline{\Theta}\in \Hom_F(N^{\textup{ab}},F)$, and we set $\Theta$ to be its composition with 
$N \to N^{\textup{ab}}$. According to \cite[Prop 5]{Rodier}, there exists a unique automorphism $I_\theta$ of the group $G$ satisfying:

\begin{itemize}
\item $I_\theta(N)=N$;
\item $\Theta\circ I_{\theta}=\Theta^{-1}$;
\item $I_\theta(t)=w_0 t^{-1} w_0$ for all $t\in T$. 
\end{itemize}

\noindent In addition this is an involution \textit{i.e.} $I_\theta^2 = \textup{Id}$. Let $A_\theta$ be the anti-involution of $G$ defined by: 
\[A_{\theta}(g) = I_{\theta}(g^{-1}) \textup{ for } g \in G.\]

\paragraph{Uniqueness of Whittaker models.} The main goal of this section is to explain how Rodier's proof can be generalized to representations with coefficients in $R$. We show that points (1)-(2)-(3) from the introduction hold over $R$. This is also known as the uniqueness of Whittaker models:
\begin{thm}\label{Thm M1}
Let $\pi \in \textup{Rep}_R(G)$ be irreducible. Then: 
$$d_\theta(\pi) \leq 1.$$ \end{thm}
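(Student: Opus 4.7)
The plan is to follow the three-step decomposition spelled out at the start of the introduction: prove (1) the inequality $d_\theta(\pi)\cdot d_{\theta^{-1}}(\pi^\vee)\leq 1$ for every irreducible $\pi$; (2) the equality $d_\theta(\pi)=d_{\theta^{-1}}(\pi^\vee)$ when $\pi$ is cuspidal; and (3) reduce the general irreducible case to the cuspidal one via Rodier's heredity.

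For step~(1), I would follow Rodier's Gelfand--Kazhdan style argument: reduce the inequality to showing that the space of $R$-valued $(\theta^{-1}\otimes \theta)$-equivariant distributions on $G$ has dimension at most one, via the bilinear-form construction that pairs $\Hom_N(\pi,\theta)\otimes \Hom_N(\pi^\vee,\theta^{-1})$ into that space. The one-dimensionality of the distribution space follows from its $A_\theta$-invariance, which is precisely Proposition~\ref{invariance_Atheta_prop}. To prove that proposition one lifts from $R$ to a complete discrete valuation ring $\mathcal{A}$ with residue field~$R$ via Lemma~\ref{lemma lifting of equivariant forms}: Rodier's original argument handles the characteristic-zero fraction field, and the lifting transports the invariance back to $R$.

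Step~(2) is an immediate application of our main equality Theorem~\ref{multiplicities_main_thm}: an irreducible cuspidal representation has matrix coefficients of compact support modulo~$Z$, hence is $Z$-compact, so $d_\theta(\pi)=d_{\theta^{-1}}(\pi^\vee)$ as soon as either side is finite; but step~(1) already forces both sides to be at most~$1$, yielding $d_\theta(\pi)\leq 1$ for all cuspidal $\pi$. For step~(3), an arbitrary irreducible $\pi$ is a subquotient of some $\Ind_P^G(\sigma)$, where $\sigma$ is an irreducible cuspidal representation of a Levi $M$ of a standard parabolic $P$. Rodier's heredity---a formal manipulation of $\theta$-coinvariants that relies on the exactness statement of Lemma~\ref{lemme exact2} and holds unchanged over $R$---gives $d_\theta(\pi)\leq d_\theta(\Ind_P^G(\sigma))=d_{\theta_M}(\sigma)$ for a suitable non-degenerate character $\theta_M$ of $N\cap M$. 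Step~(2) applied to the cuspidal $\sigma$ over $M$ then closes the argument.

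The delicate point is the $A_\theta$-invariance of equivariant distributions in step~(1). Over $\C$, Rodier's proof uses analytic structures; a direct transcription to positive characteristic $\ell\neq 2$ goes through, but it breaks down when $\ell=2$. Lemma~\ref{lemma lifting of equivariant forms} is the key technical device that bypasses this obstruction uniformly in $\ell$, by lifting equivariant forms from the residue field to a discrete valuation ring in whose fraction field the characteristic-zero proof is available. Once this invariance is secured and combined with Theorem~\ref{multiplicities_main_thm}, the remainder of the argument is formal bookkeeping via heredity.
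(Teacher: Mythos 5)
Your plan follows exactly the three-step decomposition the paper uses, and the key ingredients you identify---$A_\theta$-invariance via Proposition \ref{invariance_Atheta_prop}, the lifting Lemma \ref{lemma lifting of equivariant forms}, the equality Theorem \ref{multiplicities_main_thm} for $Z$-compact representations, and Rodier's heredity---are precisely the paper's. So the overall architecture is sound and matches the source.

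One intermediate description of step (1) is incorrect, however. You write that the Gelfand--Kazhdan reduction amounts to showing that the space of $R$-valued $(\theta^{-1}\otimes\theta)$-equivariant distributions on $G$ has dimension at most one, and that this one-dimensionality follows from $A_\theta$-invariance. That is not what happens and is in fact false: this distribution space is infinite-dimensional (each irreducible generic representation produces its own equivariant Bessel-type distribution). What Proposition \ref{invariance_Atheta_prop} actually gives is that \emph{every} element of that (large) space is fixed by the anti-involution $A_\theta$; the multiplicity bound then comes from the non-degeneracy of the bilinear pairing $\Hom_N(\pi,\theta)\times\Hom_N(\pi^\vee,\theta^{-1})\to\{\text{distributions}\}$ combined with this invariance, which forces a symmetry constraint linking the two dimensions (this is Rodier's Propositions 10 and 11, transcribed verbatim over $R$). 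Relatedly, your sentence that ``step (1) already forces both sides to be at most $1$'' overstates what the inequality $d_\theta(\pi)\cdot d_{\theta^{-1}}(\pi^\vee)\leq 1$ gives on its own: a priori one side could be zero and the other large. It is only the combination with the equality of step (2) that pins both down to $\leq 1$. Neither slip affects the soundness of your overall plan, but carrying out step (1) as literally described would stall, so it is worth fixing the phrasing to reflect that $A_\theta$-invariance feeds a symmetry argument, not a dimension count on the full distribution space.
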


\subsection{Inequality (1)}

We generalize the key result \cite[Prop 9]{Rodier} over $R$. 

\begin{prop} \label{invariance_Atheta_prop} All $\theta^{-1}\otimes \theta$-equivariant distributions on $C_c^\infty(G)$ are invariant under $A_{\theta}$. \end{prop}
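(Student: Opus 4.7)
The plan is to transfer the statement to coefficients of characteristic zero, where Rodier's proof \cite[Prop.~9]{Rodier} applies, using the lifting result Lemma \ref{lemma lifting of equivariant forms} to go between the residue field and fraction field of a suitable discrete valuation ring. First I would fix a complete discrete valuation ring $\mathcal{A}$ with residue field $R$ and fraction field $\mathcal{K}$ of characteristic zero: when $R$ has characteristic zero one may take $\mathcal{A}=R$, and otherwise the ring of Witt vectors $W(R)$ does the job. Since $N$ is exhausted by compact pro-$p$ subgroups and $\ell\neq p$, the character $\theta:N\to R^\times$ takes values in roots of unity of order prime to $\ell$, hence lifts uniquely via Teichm\"uller representatives to a smooth character $\tilde{\theta}:N\to\mathcal{A}^\times$ reducing to $\theta$ modulo the maximal ideal.

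Next I would apply Lemma \ref{lemma lifting of equivariant forms} to the group $H=N\times N$ with character $\tilde{\theta}^{-1}\otimes\tilde{\theta}$, acting on $V=C_c^\infty(G,\mathcal{A})$ via the tensor product of left and right translations. The three hypotheses hold: $V$ is torsion free over $\mathcal{A}$; $V\otimes_\mathcal{A}\mathcal{K}\simeq C_c^\infty(G,\mathcal{K})$ has countable $\mathcal{K}$-dimension because $G$ is $\sigma$-compact; and $N\times N$ is exhausted by compact pro-$p$ subgroups, whose pro-order is invertible in $\mathcal{A}$. Given $D$ as in the statement, the lemma produces a lift $\tilde{D}:V\to\mathcal{A}$ intertwining the character $\tilde{\theta}^{-1}\otimes\tilde{\theta}$ and reducing to $D$ modulo the maximal ideal; extending scalars yields a $(\tilde{\theta}^{-1}\otimes\tilde{\theta})$-equivariant distribution $\tilde{D}_\mathcal{K}$ on $C_c^\infty(G,\mathcal{K})$.

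By Rodier's result \cite[Prop.~9]{Rodier}, valid over any algebraically closed field of characteristic zero via a standard base change argument from $\C$, the distribution $\tilde{D}_\mathcal{K}$ is $A_\theta$-invariant, i.e.\ $\tilde{D}_\mathcal{K}(f\circ A_\theta)=\tilde{D}_\mathcal{K}(f)$ for every $f$. The same equality then holds for $f\in V$, and reducing modulo the maximal ideal of $\mathcal{A}$ gives the $A_\theta$-invariance of $D$. The main subtlety I anticipate is the base change step extending Rodier's statement from $\C$ to an arbitrary algebraically closed field of characteristic zero, which should follow by standard Lefschetz-principle considerations once the coinvariant space $C_c^\infty(G)_{\tilde\theta^{-1}\otimes\tilde\theta}$ is set up uniformly in the coefficient field; the other potential snag, namely the existence of the Teichm\"uller lift of $\theta$, is clean precisely because $\ell\neq p$.
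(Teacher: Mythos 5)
Your proposal follows essentially the same route as the paper: lift to Witt vectors $\mathcal{A}=W(R)$ (when $\ell>0$), apply Lemma \ref{lemma lifting of equivariant forms} with $H=N\times N$, $\chi=\tilde\theta^{-1}\otimes\tilde\theta$, $V=C_c^\infty(G,\mathcal{A})$, extend the lifted distribution to $\mathcal{K}=\mathrm{Frac}(\mathcal{A})$, invoke Rodier's characteristic-zero result there, and reduce. The intermediate checks you propose (torsion-freeness, countable dimension of $C_c^\infty(G,\mathcal{K})$, exhaustion by compact pro-$p$ subgroups, Teichm\"uller lift of $\theta$) all match the paper's proof.

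One slight imprecision: you quote Rodier's Proposition 9 as ``valid over any algebraically closed field of characteristic zero via a standard base change argument from $\C$,'' but $\mathcal{K}=\mathrm{Frac}(W(R))$ is \emph{not} algebraically closed, so this formulation doesn't directly apply to $\tilde{D}_\mathcal{K}$. The paper handles this more cleanly by observing that Rodier's argument is purely algebro-geometric and works verbatim over any coefficient field of characteristic $\neq 2$ (the only nonformal step is the final implication $T=-T\Rightarrow T=0$). If you prefer to keep your base-change strategy, you would need an additional two lines: extend $\tilde{D}_\mathcal{K}$ to $C_c^\infty(G,\overline{\mathcal{K}})$, deduce $A_\theta$-invariance there, and restrict back; this is immediate, but should be said. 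Also, when $R$ already has characteristic zero you should just apply Rodier's result directly to $R$ rather than force it through a DVR (taking $\mathcal{A}=R$ puts you outside the hypotheses of Lemma \ref{lemma lifting of equivariant forms}, which asks for a discrete valuation ring).
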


\begin{proof}
Over $R=\C$ this is \cite[Prop 9]{Rodier}. The proof of Rodier is valid in characteristic zero, and in fact in characteristic different from $2$ (the argument uses at the end that $T=-T$ implies $T=0$ for a distribution $T$). Hence to deduce the result above from the characteristic zero case, we use the lifting result of Lemma \ref{lemma lifting of equivariant forms} with $k=R$, its Witt vectors $\mathcal{A}=W(R)$, the representation $V=C_c^\infty(G,W(R))$, the group $H=N\times N$ and the character $\chi=\tilde{\theta}^{-1}\otimes \tilde{\theta}$ for $\tilde{\theta}$ lifting $\theta$ to $W(R)$. Note that $V$ is torsion free as an $A$-module \cite[App C.5]{Vbook} and is contained in $C_c^\infty(G,\mathcal{K})$ whose dimension is countable over $\mathcal{K}=\textup{Frac}(\mathcal{A})$. Also a distribution $\tilde{T} \in \textup{Hom}_N(V,\tilde{\theta}^{-1} \otimes \tilde{\theta})$ naturally extend to a distribution over $\mathcal{K}$, implying the $A_\theta$-invariance of $\tilde{T}$ and its reduction as well. \end{proof}
The proofs of \cite[Prop 10 \& Prop 11]{Rodier} then apply verbatim over $R$ and Rodier deduces following the argument of Gelfand and Kazhdan (\cite{GK}) the inequality below for complex representations \cite[Prop 11]{Rodier}. Note that by \cite{Vbook}, irreducible representations of $G$ are always admissible so we remove the term admissible from the statement below. 

\begin{thm} \label{cor ineg} Let $\pi \in \textup{Rep}_R(G)$ be irreducible. Then:
\[d_\theta(\pi) \times d_{\theta^{-1}}(\pi^\vee) \leq 1.\] \end{thm}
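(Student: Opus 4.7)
With Proposition \ref{invariance_Atheta_prop} in hand, the strategy is to import the Gelfand--Kazhdan argument from \cite{GK}, in the form presented by Rodier in \cite[Prop 10 \& 11]{Rodier}, and simply verify that it functions equally well over the algebraically closed field $R$ as it does over $\C$. The sole analytic ingredient of Rodier's proof was the $A_\theta$-invariance of $(\theta^{-1} \otimes \theta)$-equivariant distributions on $G$, which has just been secured in our setting.

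The concrete steps, following Rodier, go as follows. First, for $L \in \Hom_N(\pi,\theta)$ and $L' \in \Hom_N(\pi^\vee,\theta^{-1})$, construct a $(\theta^{-1} \otimes \theta)$-equivariant distribution $D_{L,L'}$ on $G$ via the matrix-coefficient pairing $c : \pi^\vee \otimes_R \pi \hookrightarrow C^\infty(G)$; this uses the admissibility of irreducible smooth $R$-representations of $G$ (automatic by \cite{Vbook}) together with the $(G \times G)$-irreducibility of $\pi^\vee \otimes_R \pi$ already invoked just before Proposition \ref{prop main}. Second, apply Proposition \ref{invariance_Atheta_prop} to deduce that every such $D_{L,L'}$ is $A_\theta$-invariant. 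Third, transport the $A_\theta$-invariance back to the source: since $A_\theta$ is an anti-involution which exchanges left and right translations on $N$ and swaps the characters $\theta$ and $\theta^{-1}$, the resulting symmetry, combined with the irreducibility of $\pi^\vee \otimes_R \pi$, forces the natural map $L \otimes L' \mapsto D_{L,L'}$ to land in a one-dimensional space. The bound $d_\theta(\pi) \cdot d_{\theta^{-1}}(\pi^\vee) \leq 1$ follows immediately.

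The main point requiring attention is not any single computation, but rather checking that each link in Rodier's chain of reasoning genuinely carries over to $R$-coefficients: Schur's lemma (which holds because $R$ is algebraically closed and $\pi$ is admissible), the dictionary between $(N \times N)$-equivariant bilinear forms on $\pi \otimes_R \pi^\vee$ and distributions on $G$ (which hinges on injectivity of the matrix-coefficient map, itself a Flath-type irreducibility statement), and the $A_\theta$-invariance now supplied by Proposition \ref{invariance_Atheta_prop}. All of these are formal and make no use of properties specific to $\C$, so no obstruction arises and the proof of \cite[Prop 10 \& 11]{Rodier} transposes \emph{verbatim}.
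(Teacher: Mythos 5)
Your proposal matches the paper's own approach: after Proposition \ref{invariance_Atheta_prop} supplies the $A_\theta$-invariance of $(\theta^{-1}\otimes\theta)$-equivariant distributions, the paper simply asserts that Rodier's \cite[Prop 10 \& 11]{Rodier} (the Gelfand--Kazhdan argument) apply verbatim over $R$, using admissibility of irreducible representations from \cite{Vbook} — exactly the chain of reasoning you lay out. Your phrasing in the third step (``forces the natural map $L\otimes L'\mapsto D_{L,L'}$ to land in a one-dimensional space'') is a slightly loose summary of the actual mechanism in Rodier's Prop 11 — which deduces the bound from the $A_\theta$-symmetry together with injectivity of the matrix-coefficient map and Schur's lemma — but since you are explicitly deferring the details to Rodier's transposed argument, this is a presentational imprecision rather than a gap.
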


\subsection{Equality (2) in the cuspidal case}

As mentioned in the introduction, the contragredient $\pi^\vee$ of a irreducible complex unitary representation $\pi$ is equal to its complex conjugate: $\pi^\vee \simeq {}^c \pi$. Complex cuspidal representations are unitarisable \textit{i.e.} unitary up to a twist by a character, and both \cite[Prop 13]{Rodier} and \cite[Lem 2]{PrasMVW} rely on this specific property of unitary representations to prove $d_\theta(\pi) = d_{\theta^{-1}}(\pi^\vee)$ for irreducible complex representations. Our Theorem \ref{multiplicities_main_thm} overcomes the absence of this unitarian trick in the setting of cuspidal (or equivalently $Z$-compact \cite[II.2.7]{Vbook}) $R$-representations:

\begin{thm} \label{multiplicity_equal_whittaker_section_thm}
Let $\pi \in \textup{Rep}_R(G)$ be irreducible cuspidal. Then:
$$d_\theta(\pi) = d_{\theta^{-1}}(\pi^\vee).$$ \end{thm}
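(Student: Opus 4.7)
The plan is to deduce this at once from our main Theorem \ref{multiplicities_main_thm} of Section \ref{Sec preliminary}. First I would observe that for irreducible smooth $R$-representations of the reductive $p$-adic group $G$, being cuspidal is equivalent to being $Z$-compact, by \cite[II.2.7]{Vbook}; in particular both $\pi$ and its contragredient $\pi^\vee$ are admissible $Z$-compact representations with central characters inverse of each other \cite[I.7.11]{Vbook}. Next I would verify the running hypotheses of Section \ref{Sec preliminary}: $G$ admits an open compact subgroup of invertible pro-order in $R$ because $\ell \neq p$, and the unipotent radical $N$ of a Borel is exhausted by its pro-$p$ compact open subgroups, hence by compact subgroups of invertible pro-order in $R$.

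I would then split according to whether $\omega_\pi$ and $\theta$ agree on $Z \cap N$. If they disagree, any $\theta$-Whittaker functional on $\pi$ would force contradictory actions on $Z \cap N$, so $\Hom_N(\pi,\theta) = 0$; by the same argument applied to $\pi^\vee$ (whose central character is $\omega_\pi^{-1}$) we get $\Hom_N(\pi^\vee,\theta^{-1}) = 0$, and the theorem holds trivially. Otherwise, Theorem \ref{cor ineg} gives $d_\theta(\pi) \cdot d_{\theta^{-1}}(\pi^\vee) \leq 1$, which in particular ensures the finiteness of both dimensions (they both lie in $\{0,1\}$). Theorem \ref{multiplicities_main_thm} then yields $d_\theta(\pi) = d_{\theta^{-1}}(\pi^\vee)$.

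There is no real obstacle at this stage, since the substantial work has already been carried out in Section \ref{Sec preliminary}. The whole point of phrasing Theorem \ref{multiplicities_main_thm} in the generality of $Z$-compact representations of arbitrary locally profinite groups is precisely to sidestep Rodier's and Prasad's appeal to $\pi^\vee \simeq {}^c\pi$ for complex unitary representations, which has no analogue over a general coefficient field $R$. The cuspidality assumption enters only through the equivalence cuspidal $\Leftrightarrow$ $Z$-compact, which is exactly what lets the machinery of Section \ref{Sec preliminary} apply to $\pi$.
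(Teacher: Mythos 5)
Your proposal is correct and follows essentially the same route the paper takes: cuspidal $\Leftrightarrow$ $Z$-compact by \cite[II.2.7]{Vbook}, then apply Theorem \ref{multiplicities_main_thm} (with Theorem \ref{cor ineg} supplying finiteness, which the paper leaves implicit). The only superfluous step is the case split on whether $\omega_\pi$ and $\theta$ agree on $Z\cap N$: for a connected reductive group the center lies in every maximal torus, so $Z\cap N=\{1\}$ and the two characters agree automatically.
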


\subsection{Rodier's heredity property (3)}

Let $P = MU$ be a standard parabolic subgroup of $G$ and let $\tau \in \textup{Rep}_R(M)$ be irreducible. We denote by $i_P^G$ the parabolic induction (non-normalized) functor. Set $N_M = M \cap w_0 N w_0^{-1}$ and define a character $\theta_M$ by $\theta_M (n) = \theta(w_0^{-1} n w_0)$ for $n \in N_M$. Rodier proves the following in \cite[Sec 5]{Rodier}, and its proof is valid for $R$-representations thanks to \cite[App B.1]{AKMSS} again.

\begin{thm} \label{inequality_whittaker_section_thm}  
We have:
$$\textup{Hom}_N(\textup{ind}_P^G(\tau),\theta) \simeq \textup{Hom}_{N_M}(\tau,\theta_M).$$
In particular:
$$d_\theta(\ind_P^G(\tau)) = \sum m_\pi \ d_\theta(\pi) = d_{\theta_M}(\tau)$$
where $\sum m_\pi \pi$ is the semisimplification of $i_P^G(\tau)$. \end{thm}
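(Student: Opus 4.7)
The plan is to follow Rodier's original argument from \cite[Sec 5]{Rodier}, substituting the classical Mackey-theoretic ingredients by their $R$-linear analogues from \cite[App B.1]{AKMSS}. The strategy is to stratify $\ind_P^G(\tau)$ as an $N$-module by using the double coset decomposition $P \backslash G / N$, which is finite by the Bruhat decomposition, and then to show that only the open double coset, corresponding to the longest Weyl element $w_0$, contributes to $\Hom_N(-,\theta)$.

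First I would build, by compatibly ordering representatives of $P \backslash G / N$ so that closures are respected, a filtration $0 = V_0 \subseteq V_1 \subseteq \cdots \subseteq V_r = \ind_P^G(\tau)$ of $N$-submodules such that each successive quotient is supported on a single double coset $PwN$. A Mackey-type argument identifies $V_i / V_{i-1}$ with $\ind_{N_w}^N(\tau_w)$, where $N_w = N \cap w^{-1} P w$ and $\tau_w$ is the appropriate twisted restriction of $\tau$. Applying the functor $V \mapsto V_\theta$, which is exact by Lemma \ref{lemme exact2}, preserves this filtration, and by Frobenius reciprocity, computing $\Hom_N(-,\theta)$ on each subquotient amounts to computing $\Hom_{N_w}(\tau_w, \theta|_{N_w})$.

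Second, for each $w$ indexing a non-open double coset, Rodier's combinatorial argument shows that $\theta|_{N_w}$ is non-trivial on a root subgroup on which $\tau_w$ acts trivially, forcing the corresponding $\Hom$ space to vanish. This step is where the non-degeneracy of $\theta$, i.e.\ non-triviality on the root subgroup of every simple root, plays an essential role. For the unique open double coset $P w_0 N$, direct inspection shows that $N_{w_0}$ is conjugate via $w_0$ to $N_M$, under which $\theta|_{N_{w_0}}$ corresponds to $\theta_M$, and Frobenius reciprocity delivers the desired isomorphism. The \emph{in particular} statement follows from the additivity of $\dim V_\theta$ on short exact sequences, another consequence of Lemma \ref{lemme exact2}.

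The main obstacle is the vanishing on non-open strata: the combinatorics of the root system must be combined with the non-degeneracy of $\theta$ to exhibit, for each such $w$, a root subgroup witnessing the incompatibility. All other parts of the argument are formal and transfer to $R$-coefficients once one has Lemma \ref{lemme exact2} together with the $R$-linear Mackey-type computations of \cite[App B.1]{AKMSS}, both available under the running hypothesis that compact open subgroups of $N$ have invertible pro-order in $R$.
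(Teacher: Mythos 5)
Your proposal is correct and matches the paper's approach: the paper simply asserts that Rodier's proof from \cite[Sec 5]{Rodier} goes through over $R$ once the Mackey-theoretic ingredients are replaced by their $R$-linear versions from \cite[App B.1]{AKMSS} and the exactness of $\theta$-coinvariants (Lemma \ref{lemme exact2}) is used in place of complex-analytic arguments, and your proposal spells out exactly those steps. The only cosmetic remark is that the identification $(\ind_{N_w}^N\tau_w)_\theta \simeq (\tau_w)_{\theta|_{N_w}}$ you attribute to "Frobenius reciprocity" is really an integration-over-$N_w\backslash N$ computation of coinvariants of a compactly induced module (as in Corollary \ref{factorizing_coinvaiants_cor}), since $N_w$ is not open in $N$; the conclusion is nonetheless correct.
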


\begin{rem}
Rodier uses normalized parabolic induction, but because an unramified character of $P$ must be trivial on $N_M$, one deduces the above statement from that in \cite{Rodier}. Moreover up notational changes, one can read Rodier's proof for the non-normalized version of parabolic induction. 
\end{rem}

The existence of the cuspidal support \cite[II.2.4]{Vbook} asserts that all irreducible representations embed into some $\textup{ind}_P^G(\tau)$ with $\tau \in \textup{Rep}_R(M)$ irreducible cuspidal. By the theorem above, as well as Theorems \ref{multiplicity_equal_whittaker_section_thm} and \ref{inequality_whittaker_section_thm} applied to $\tau$, we obtain:
$$d_\theta(\pi) \leq d_{\theta_M}(\tau) \leq 1.$$

\section{Duality and the Rodier involution}\label{sec Yang}

For complex representations, one has the following result conjectured by Dipendra Prasad in \cite[Conj 1]{PrasMVW} and proved by Chang Yang in \cite[Cor 7.7]{Yang} following the method of Gelfand and Kazhdan \cite[Th 4 b)]{GK}. Recall that in the context of Section \ref{SecM1}, the involution $I_\theta$ is an automotphism lifting the Chevalley involution of $G$. Yang assumes his field $F$ to be $p$-adic but its proof works for any non Archimedean local fields in view of Proposition \ref{invariance_Atheta_prop}, which alternatively is \cite[Th 7.6]{Yang}. Our proposition being valid over any algebraically closed fields of characteristic not $p$, we recover Yang's result over such fields as the rest of his proof applies verbatim in our context. Hence we deduce:

\begin{prop}\label{prop Yang}
For all irreducible $\theta$-generic representations $\pi \in \textup{Rep}_R(G)$, we have: 
$$\pi^\vee\simeq \pi\circ I_{\theta}.$$ \end{prop}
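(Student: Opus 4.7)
The strategy is the Gelfand--Kazhdan method as implemented by Yang in \cite[Cor 7.7]{Yang}: construct a non-zero $G$-invariant bilinear form $B : \pi \otimes (\pi \circ I_\theta) \to R$, where $G$ acts on the tensor product by $\pi(g) \otimes \pi(I_\theta(g))$. Such a form gives rise, via $w \mapsto B(-, w)$, to a non-zero $G$-intertwiner $\pi \circ I_\theta \to \pi^\vee$: indeed one checks that the $G$-invariance of $B$ is equivalent to the relation $B(v, \pi(I_\theta(g)) w) = B(\pi(g^{-1}) v, w)$, which is exactly what makes this map $G$-equivariant. Both source and target are irreducible (the source because $I_\theta$ is a group automorphism of $G$, the target by admissibility of $\pi$, which is valid over $R$ by \cite{Vbook}), so this non-zero intertwiner must be an isomorphism and yields the desired $\pi^\vee \simeq \pi \circ I_\theta$.

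The construction of $B$ in Yang's argument rests on a single field-sensitive input: every $(\theta^{-1} \otimes \theta)$-equivariant distribution on $C_c^\infty(G)$ is invariant under the anti-involution $A_\theta$. Over $\C$ this is \cite[Prop 9]{Rodier}; over any algebraically closed $R$ of characteristic $\ell \neq p$ it is precisely our Proposition \ref{invariance_Atheta_prop}, obtained via the lifting result of Lemma \ref{lemma lifting of equivariant forms}. Granted this, one realises $\pi$ inside $\textup{Ind}_N^G(\theta)$ via a choice of $\theta$-Whittaker functional, observes that $W \mapsto W \circ I_\theta$ realises $\pi \circ I_\theta$ inside $\textup{Ind}_N^G(\theta^{-1})$ (using $\theta \circ I_\theta = \theta^{-1}$), and uses the $A_\theta$-invariant distribution together with Frobenius reciprocity and the uniqueness of Whittaker models (Theorem \ref{Thm M1}) to produce the sought bilinear form.

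The main obstacle, namely the $A_\theta$-invariance of $(\theta^{-1} \otimes \theta)$-equivariant distributions beyond characteristic zero, has already been dealt with in Proposition \ref{invariance_Atheta_prop}; this is the step where the arguments of Rodier and Yang genuinely use special properties of $\C$ (complex conjugation, or vanishing of distributions under $T = -T$), and where we replace them by a lifting argument from the Witt vectors $W(R)$. Every remaining step in Yang's proof relies only on admissibility of irreducible representations, Whittaker uniqueness, and Frobenius reciprocity, all of which are insensitive to the characteristic in our setting. Consequently Yang's argument transfers verbatim to our coefficients and establishes the proposition.
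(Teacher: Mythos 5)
Your proposal is correct and takes essentially the same approach as the paper: the paper's own ``proof'' is the paragraph preceding the statement, which simply observes that Yang's Gelfand--Kazhdan argument in \cite[Cor 7.7]{Yang} carries over verbatim once Proposition~\ref{invariance_Atheta_prop} is established over $R$. You have merely unwound what Yang's proof does a bit more explicitly (the $A_\theta$-invariant distribution producing the invariant pairing $\pi \otimes (\pi \circ I_\theta) \to R$, then irreducibility of both sides), identifying the same single field-sensitive input.
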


\section{An application to the metaplectic cover of $\GL_n(F)$}\label{Sec Met}

We let $r$ be a positive integer such that $F$ 
contains all $r$-th roots of unity and denote by $\mu_r(F)$ this set of roots. In \cite{KP}, Kazhdan and Patterson defined $r$-fold covers of $G_n = \GL_n(F)$ as a group extension of $G_n$ by $\mu_r(F)$. Let $N$ be the unipotent radical of the group of upper triangular matrices $B$ in $G_n$ and $\theta$ be a non-degenerate character of $N$.

Let $\widetilde{G}_n$ be such an $r$-fold cover. Following \cite{Banks}, we can embed $N$ as a subgroup of $\widetilde{G}_n$. Therefore the notion of Whittaker models for $\theta$ also makes sense for irreducible representations of $r$-fold covers after specifying an embedding of $N$. Parabolic subgroups also make sense in this context, so does parabolic induction. Then the main result of \cite{Banks}, which is the extension to $\widetilde{G}_n$ of Rodier's heredity property for $G_n$, namely Theorem \ref{inequality_whittaker_section_thm} for $G_n$, holds over $R$. Hence an immediate application of Theorem \ref{multiplicities_main_thm} leads to:

\begin{prop}
Let $\pi = \ind_{\widetilde{P}}^{\widetilde{G}_n}(\tau)$ be a representation of $\widetilde{G}_n$ which is parabolically induced from an irreducible cuspidal representation $\tau$. Then we have: 
$$d_\theta ( \pi ) = d_{\theta^{-1}} ( \pi^\vee ),$$ 
where both dimensions are known to be finite thanks to \textup{\cite[Th]{Banks}} and \textup{\cite[Th 1.5.2]{KP}}. 
\end{prop}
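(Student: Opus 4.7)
The plan is to reduce both Whittaker dimensions to the Levi cover $\widetilde{M}$ via Banks' extension \cite[Th]{Banks} of Rodier's heredity property, and then invoke Theorem \ref{multiplicities_main_thm} on the cuspidal representation $\tau$.

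Since $\tau$ is irreducible cuspidal on $\widetilde{M}$, it is $Z$-compact (for the center of $\widetilde{M}$) by the same argument as \cite[II.2.7]{Vbook}, and $N_M = M \cap w_0 N w_0^{-1}$ is exhausted by compact subgroups of invertible pro-order in $R$ since we are away from the residual characteristic. Theorem \ref{multiplicities_main_thm} therefore applies to the pair $(\tau,\theta_M)$, giving
$$d_{\theta_M}(\tau) = d_{\theta_M^{-1}}(\tau^\vee),$$
where $\theta_M$ is the non-degenerate character of $N_M$ inherited from $\theta$ exactly as in Theorem \ref{inequality_whittaker_section_thm}.

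On the $\pi$-side, Banks' heredity applied directly to $\pi = \ind_{\widetilde{P}}^{\widetilde{G}_n}(\tau)$ yields $d_\theta(\pi) = d_{\theta_M}(\tau)$. On the $\pi^\vee$-side, the standard description of the smooth contragredient of a parabolic induction identifies $\pi^\vee$ as a parabolic induction of $\tau^\vee$ from the opposite parabolic cover $\widetilde{P}^-$, possibly twisted by a modulus character; since the modulus twist is unramified and therefore trivial on every unipotent subgroup of $\widetilde{M}$, it is invisible to the Whittaker functional. A second application of Banks' heredity, now to $\widetilde{P}^-$ with the character $\theta^{-1}$, produces $d_{\theta^{-1}}(\pi^\vee) = d_{\theta_M^{-1}}(\tau^\vee)$. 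Chaining the three equalities delivers the proposition.

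The main obstacle is verifying that the smooth dual of a parabolic induction on the metaplectic cover takes the claimed form and that Banks' heredity applies verbatim to the opposite parabolic cover. Both are routine extensions of their linear analogues: the duality formula uses that $\widetilde{G}_n / \widetilde{P}$ is compact so compact and smooth induction coincide, together with the classical identification $(\Ind_P^G \tau)^\vee \simeq \Ind_{P^-}^G(\tau^\vee \otimes \delta_P^{\pm 1})$ transferred to the cover via Banks' embedding; the second point follows from rereading \cite{Banks} with $\widetilde{P}^-$ in place of $\widetilde{P}$, with some elementary bookkeeping to identify the Levi character produced there with $\theta_M^{-1}$ up to conjugation by a Weyl element.
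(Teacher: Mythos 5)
Your proof reconstructs exactly the argument the paper leaves implicit: Banks' extension of Rodier's heredity reduces both $d_\theta(\pi)$ and $d_{\theta^{-1}}(\pi^\vee)$ to Whittaker multiplicities of $\tau$ and $\tau^\vee$ on the Levi cover $\widetilde{M}$, and then Theorem \ref{multiplicities_main_thm} applies to the irreducible cuspidal (hence $Z$-compact) $\tau$. One small simplification is available: the standard contragredient formula $(\ind_{\widetilde{P}}^{\widetilde{G}_n}\tau)^\vee \simeq \ind_{\widetilde{P}}^{\widetilde{G}_n}(\tau^\vee\otimes\delta_{\widetilde{P}}^{-1})$ keeps the \emph{same} standard parabolic, so Banks' heredity applies directly and the modulus twist, being unramified, is invisible on $N_M$; this avoids the detour through the opposite parabolic $\widetilde{P}^-$ and the attendant Weyl-conjugation bookkeeping.
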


We could not find a proof of this fact in the literature  and perhaps this is well known to some experts. Note that this proposition would follow from the beautiful formula conjectured and proved in some instances in \cite{Jiandi} for the Whittaker multiplicity of discrete series of these covering groups. Another interesting question would be to determine whether this result holds for all irreducible representations.

\section{Shortcuts for $\ell$-modular representations} \label{Sec other approach}

In this last section we quickly recover by different means some of our main results from Sections \ref{SecM1} and \ref{sec Yang} in the setting of $\ell$-modular representations \textit{i.e.} when $R=\Fl$. This section bypasses any consideration coming from Theorem \ref{multiplicities_main_thm} and Proposition \ref{invariance_Atheta_prop}. However we have to use a result from Dat, Helm, Kurinczuk and Moss, which has yet to be published. It appears as Proposition \ref{prop DHKM} below and actually relies on deep results of Fargues and Scholze.

We fix a prime number $\ell$ different from $p$ and let $G$ be as in Section \ref{SecM1}. We call $\ell$-adic a representation of $G$ over $\Ql$ and $\ell$-modular a representation of $G$ over $\Fl$. Note that fixing an isomorphism $\Ql\simeq \C$, the theory of smooth complex and $\ell$-adic representations are the same. We say that a smooth admissible $\ell$-adic representation $\pi$ of $G$ is integral \cite[I.9.6]{Vbook} if it contains an admissible $\Zl$-lattice $L$ inside the space of $\pi$ which is $G$-stable. If $\pi$ is integral as above, in addition to being finite length, then it is known \cite[I.9.6]{Vbook} that $L\otimes \Fl$ has finite length and that the semi-simplification of $L\otimes \Fl$ is independent of the lattice $L$. In this case we denote by $\rl(\pi)$ this semi-simplification. We recall that a representation of $G$ is called supercuspidal if it does not appear as a subquotient of a representation parabolically induced from a proper parabolic subgroup of $G$. For $\ell$-modular representations, cuspidal representations are not always supercuspidal, and though irreducible representations of $G$ all admit a supercuspidal support, it might not be unique \cite{DatSCS}. We first state a result from the forthcoming paper \cite{DHKMbanal}. 

\begin{prop}\label{prop DHKM}
Let $\overline{\pi}$ be an $\ell$-modular irreducible supercuspidal representation of $G$, then there is an  irreducible integral $\ell$-adic supercuspidal representation of $G$ such that $\rl(\pi)$ contains $\overline{\pi}$. 
\end{prop}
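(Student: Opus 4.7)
The plan is to pass through the semi-simple Langlands parameters constructed by Fargues and Scholze, exploiting that these are functorial in the coefficient ring and that they should detect supercuspidality.

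First, I would attach to $\overline{\pi}$ its Fargues-Scholze semi-simple $L$-parameter $\overline{\phi} : W_F \to {}^{L}G(\Fl)$. The hypothesis that $\overline{\pi}$ is supercuspidal over $\Fl$ should translate into an ellipticity condition on $\overline{\phi}$, namely that $\overline{\phi}$ does not factor through any proper Levi subgroup of ${}^{L}G$. This is the key input from the geometric Langlands correspondence of Fargues-Scholze.

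Second, I would lift $\overline{\phi}$ to an integral parameter $\phi : W_F \to {}^{L}G(\Zl)$ preserving the non-factorization condition. Since $\ell \neq p$, the Galois cohomology groups controlling deformations of $W_F$-representations are mild: the wild inertia acts through a finite quotient of pro-$p$-order invertible in $\Zl$, and the tame and Frobenius parts are topologically generated by a pair of elements subject to a single relation. Standard deformation theory then produces such a lift, which can be chosen so that the centralizer in ${}^{L}G$ of its image remains contained in no proper Levi.

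Third, I would invoke the Fargues-Scholze correspondence in characteristic zero applied to $\phi \otimes_{\Zl} \Ql$ to obtain an irreducible $\ell$-adic smooth representation $\pi$ of $G$ whose semi-simple $L$-parameter is $\phi \otimes \Ql$. The integrality of $\phi$ yields an integral structure on $\pi$, and the non-factorization of $\phi \otimes \Ql$ through a proper Levi forces $\pi$ to be cuspidal; in characteristic zero cuspidality coincides with supercuspidality by Bernstein's theory, so $\pi$ is supercuspidal. Compatibility of the parametrization with reduction modulo $\ell$ then guarantees that every Jordan-Hölder constituent of $\rl(\pi)$ has Fargues-Scholze parameter $\overline{\phi}$.

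The main obstacle will be the final matching step: $\overline{\pi}$ and $\rl(\pi)$ a priori only share a semi-simple parameter, whereas we need $\overline{\pi}$ to occur as an actual irreducible subquotient of $\rl(\pi)$. Guaranteeing this requires a refined analysis of the mod-$\ell$ packet attached to $\overline{\phi}$, together with a careful choice of the lift $\phi$ within the formal deformation space so that the resulting $\pi$ reduces onto the correct isomorphism class; this is genuinely the Fargues-Scholze content announced in \cite{DHKMbanal} and lies beyond the reach of classical smooth representation theory.
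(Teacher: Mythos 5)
The paper does not actually prove Proposition \ref{prop DHKM}: it is explicitly quoted from the forthcoming work \cite{DHKMbanal} of Dat, Helm, Kurinczuk and Moss, and the authors emphasize both in the abstract and at the start of Section \ref{Sec other approach} that its proof lies beyond the representation-theoretic methods of the present article and requires deep results of Fargues and Scholze. There is therefore no in-paper argument against which your sketch can be measured.

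Beyond the ``matching'' gap that you flag yourself, your outline has steps that would fail as stated. First, the Fargues--Scholze construction goes only one way: it attaches a semisimple $L$-parameter to a smooth irreducible representation; it does not produce a representation from a parameter. Your third step, invoking the correspondence ``applied to $\phi \otimes_{\Zl} \Ql$'' to obtain an irreducible $\ell$-adic $\pi$ with that parameter, presupposes a surjectivity (and even a selection) statement for supercuspidals which is not part of what Fargues--Scholze establish and is itself a deep open issue for general quasi-split $G$. Second, your first step asserts that supercuspidality of $\overline{\pi}$ ``should translate'' into ellipticity of $\overline{\phi}$. Compatibility of the Fargues--Scholze parameter with parabolic induction gives that a non-supercuspidal representation has a parameter factoring through a proper Levi, hence the contrapositive (elliptic parameter implies supercuspidal); but the implication you actually need, that a supercuspidal has an elliptic parameter, is not a formal consequence of the construction and is one of the genuinely hard points. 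Third, integrality of the lifted parameter $\phi$ does not by itself yield an admissible $G$-stable $\Zl$-lattice in $\pi$; that integrality claim needs its own argument. Your sketch is a reasonable heuristic for the kind of strategy one expects \cite{DHKMbanal} to pursue, but each of these ingredients must be independently supplied, and nothing in the present paper supplies them.
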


We fix $\theta:N\rightarrow \Zl^\times$ a character of $N$, and denote by $\overline{\theta}$ its composition with the canonical surjection $\Zl\rightarrow \Fl$. We say that an irreducible $\ell$-modular (resp. $\ell$-adic) representation $\overline{\pi}$ (resp. $\pi$) of $G$ is $\overline{\theta}$-generic (resp. $\theta$-generic) if 
$\Hom_N(\overline{\pi},\overline{\theta})\neq \{0\}$ (resp. $\Hom_N(\pi,\theta)\neq \{0\}$). Thanks to Proposition \ref{prop DHKM}, we have:

\begin{corollary}\label{cor DHKM}
Let $\overline{\pi}$ be an $\ell$-modular $\overline{\theta}$-generic representation of $G$, then there is an irreducible $\theta$-generic integral $\ell$-adic representation $\pi$ of $G$ such that $\overline{\pi}$ is the unique $\overline{\theta}$-generic submodule $\rl(\pi)$. In particular $\textup{dim}_{\overline{\mathbb{F}_\ell}}(\bar{\pi}_{\bar{\theta}})=1$.
\end{corollary}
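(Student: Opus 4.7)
The plan is to lift $\overline{\pi}$ to characteristic zero via Proposition \ref{prop DHKM}, transport Whittaker information between $\Fl$, $\Zl$ and $\Ql$ using the exactness of coinvariants and Lemma \ref{lemme Justin}, and then invoke Rodier's classical multiplicity one over $\Ql$ to pin down the relevant dimension.

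By Proposition \ref{prop DHKM}, fix an irreducible integral $\ell$-adic supercuspidal representation $\pi$ of $G$ such that $\overline{\pi}$ is a Jordan–Hölder constituent of $\rl(\pi)$, together with an admissible $G$-stable $\Zl$-lattice $L\subseteq \pi$. Since $p\neq \ell$, the coinvariants functors $(-)_{\theta}$ and $(-)_{\overline{\theta}}$ are exact by Lemma \ref{lemme exact2}. Applied to a composition series of $L\otimes_{\Zl}\Fl$, using that $\overline{\pi}_{\overline{\theta}}\neq 0$, this yields $(L\otimes_{\Zl}\Fl)_{\overline{\theta}}\neq 0$; by Lemma \ref{lemme Justin} this identifies with $L_{\theta}\otimes_{\Zl}\Fl$, hence $L_{\theta}\neq 0$. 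Applied to $0\to L\to \pi\to \pi/L\to 0$, exactness provides an inclusion $L_{\theta}\hookrightarrow \pi_{\theta}$, so $L_{\theta}$ is $\Zl$-torsion-free and $\pi_{\theta}=L_{\theta}\otimes_{\Zl}\Ql$ is nonzero; thus $\pi$ is $\theta$-generic.

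By Rodier's classical multiplicity one (Theorem \ref{Thm M1} for $R=\Ql\simeq \C$), $\dim_{\Ql}\pi_{\theta}=1$. Therefore $L_{\theta}$ is a nonzero $\Zl$-submodule of a $\Ql$-line whose reduction modulo $\ell$ is nontrivial. Classifying $\Zl$-submodules of a $\Ql$-line via the $\ell$-adic valuation, the closed valuation balls are free of rank one over $\Zl$ and have one-dimensional reduction modulo $\ell$, while the open balls and the full line satisfy $\mathfrak{m}M=M$ for the maximal ideal $\mathfrak{m}$ of $\Zl$ and thus have vanishing reduction. The nonvanishing of $L_{\theta}\otimes_{\Zl}\Fl$ therefore forces $L_{\theta}\simeq \Zl$, and $\dim_{\Fl}(L\otimes_{\Zl}\Fl)_{\overline{\theta}}=1$.

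Finally, applying $(-)_{\overline{\theta}}$ to a Jordan–Hölder filtration of $L\otimes_{\Zl}\Fl$ yields
\[1 \;=\; \dim_{\Fl}(L\otimes_{\Zl}\Fl)_{\overline{\theta}} \;=\; \sum_{\sigma}m_{\sigma}\dim_{\Fl}\sigma_{\overline{\theta}},\]
summed over distinct irreducible constituents $\sigma$ of $\rl(\pi)$ with multiplicities $m_{\sigma}$. Exactly one such $\sigma$ must satisfy $\sigma_{\overline{\theta}}\neq 0$, necessarily with $m_{\sigma}=\dim_{\Fl}\sigma_{\overline{\theta}}=1$. Since $\overline{\pi}$ is $\overline{\theta}$-generic, it is this unique constituent, which proves the corollary. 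The technical crux is the identification $L_{\theta}\simeq \Zl$, reflecting the balance between characteristic-zero multiplicity one and nonvanishing of $\overline{\theta}$-coinvariants modulo $\ell$; the subtlety is genuine because $\Zl=\overline{\mathbb{Z}_{\ell}}$ is a non-noetherian valuation ring, so one must use valuation-theoretic arguments rather than the standard DVR classification of rank-one modules.
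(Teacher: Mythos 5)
Your proof breaks down at the very first step. You write ``By Proposition \ref{prop DHKM}, fix an irreducible integral $\ell$-adic supercuspidal representation $\pi$ of $G$ such that $\overline{\pi}$ is a Jordan--H\"older constituent of $\rl(\pi)$,'' but Proposition \ref{prop DHKM} is stated only for $\overline{\pi}$ irreducible \emph{supercuspidal}, whereas Corollary \ref{cor DHKM} is about an arbitrary $\overline{\theta}$-generic irreducible $\ell$-modular $\overline{\pi}$. The two hypotheses are genuinely different: if $\overline{\pi}$ is not cuspidal, it cannot occur as a constituent of $\rl(\pi)$ for any integral $\ell$-adic supercuspidal $\pi$, since Jacquet functors commute with reduction modulo $\ell$, so every constituent of $\rl(\pi)$ is cuspidal. (And plenty of $\overline{\theta}$-generic irreducibles are not cuspidal.) So the lift $\pi$ your argument rests on need not exist.

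The missing idea is the reduction to the supercuspidal support. The paper's proof first writes $\overline{\pi}$ as a subquotient of $i_P^G(\overline{\rho})$ for a supercuspidal $\overline{\rho}$ of a Levi $M$, uses Rodier's heredity (Theorem \ref{inequality_whittaker_section_thm}, valid over $R=\Fl$) to transfer genericity from $\overline{\pi}$ to $\overline{\rho}$ (as a $\overline{\theta}_M$-generic representation of $M$), and only then applies Proposition \ref{prop DHKM} --- to $\overline{\rho}$, not to $\overline{\pi}$. Your lattice/coinvariants computation is then run for the lift $\rho$ of $\overline{\rho}$ on the Levi (where uniqueness of $\ell$-adic Whittaker models applies), yielding $\dim_{\Fl}\rl(\rho)_{\overline{\theta}_M}=1$, and finally Rodier's heredity is applied once more to $i_P^G(\rho)$ to conclude $\dim_{\Fl}\overline{\pi}_{\overline{\theta}}=1$, with $\overline{\pi}$ the unique $\overline{\theta}$-generic constituent.

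Two remarks on the parts you did write. The valuation-ring discussion --- that a nonzero $\Zl$-submodule of a $\Ql$-line has nonvanishing reduction if and only if it is free of rank one, by classifying submodules by valuation balls and noting that open balls and the whole line are $\ell$-divisible --- is a genuine and useful elaboration of a one-line assertion in the paper (since $\Zl$ is a non-discrete rank-one valuation ring, not a DVR, this requires care). And your closing Jordan--H\"older count is exactly the paper's final step. But these correct ingredients don't repair the gap: without the passage through the Levi, Proposition \ref{prop DHKM} simply does not furnish the lift you invoke, and the argument cannot start.
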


\begin{proof} First of all there exist a parabolic subgroup $P = M U$ of $G$ and an irreducible supercuspidal representation $\overline{\rho}$ of $M$ such that $\bar{\pi}$ is a subquotient of $i_P^G(\bar{\rho})$. Thanks to Rodier's heredity of Whittaker models, which still holds for $\ell$-modular representations according to Section \ref{SecM1}, we deduce that $\bar{\rho}$ must be $\bar{\theta}|_{N_M}$-generic. By Proposition \ref{prop DHKM}, let $\rho$ be an irreducible integral $\ell$-adic representation of $M$ such that $r_\ell(\rho)$ contains $\bar{\rho}$.

We now want to prove that $\rho$ is $\theta$-generic. To do so, we use a reduction argument. Choose a $G$-stable $\overline{\mathbb{Z}}_\ell$-lattice $L$ in $\rho$. As $\rho \in \textup{Rep}_{\overline{\mathbb{Q}_\ell}}(G)$, the exactness of $\theta$-coinvariants in Lemma \ref{lemme exact2} gives:
$$L_{\theta} \subset \rho_\theta = \rho_{\theta_{\overline{\mathbb{Q}_\ell}}}.$$
The $\overline{\mathbb{Q}_\ell}$-vector space $\rho_\theta$ has dimension at most $1$ by uniqueness of $\ell$-adic Whittaker models. Furthemore we have, thanks to Lemma \ref{lemme Justin}, the equality of dimensions:
$$\textup{dim}_{\overline{\mathbb{F}_\ell}}(L_\theta \otimes_{\overline{\mathbb{Z}_\ell}} \overline{\mathbb{F}_\ell}) = \textup{dim}_{\overline{\mathbb{F}_\ell}}( (L \otimes_{\overline{\mathbb{Z}_\ell}} \overline{\mathbb{F}_\ell})_{\bar{\theta}}),$$
and Lemma \ref{lemme exact2} also imposes:
$$\textup{dim}_{\overline{\mathbb{F}_\ell}}( (L \otimes_{\overline{\mathbb{Z}_\ell}} \overline{\mathbb{F}_\ell})_{\bar{\theta}}) = \textup{dim}_{\overline{\mathbb{F}_\ell}}(r_\ell(\rho)_{\bar{\theta}}) \geq \textup{dim}_{\overline{\mathbb{F}_\ell}}(\bar{\rho}_{\bar{\theta}}) > 0.$$
So $L_\theta \neq 0$ and $\rho_\theta \simeq \overline{\mathbb{Q}_\ell}$.

Now, if $M$ is a sub-$\overline{\mathbb{Z}_\ell}$-module of $\overline{\mathbb{Q}_\ell}$, then $M \simeq \overline{\mathbb{Z}_\ell}$ if and only if its reduction modulo $\ell$ is non-zero. So $L_\theta \simeq \overline{\mathbb{Z}_\ell}$ and $\textup{dim}_{\overline{\mathbb{F}_\ell}}(r_\ell(\rho)_{\bar{\theta}}) = 1 = \textup{dim}_{\overline{\mathbb{F}_\ell}}(\bar{\rho}_{\bar{\theta}})$. Then we conclude by Rodier's heredity property that $\textup{dim}_{\overline{\mathbb{F}_\ell}}(\bar{\pi}_{\bar{\theta}}) = 1$. \end{proof}
 
\begin{rem} Note that the proof of Corollary \ref{cor DHKM} brings an alternative very short proof of Theorem 4.1 for $\ell$-modular representations. Only Rodier's heredity for $\ell$-modular
representations is used, as well as Rodier's heredity and multplicity at most one for $\ell$-adic
representations. \end{rem}

From Theorems \ref{Thm M1} and \ref{prop Yang}, together with Theorem \ref{multiplicities_main_thm} and Corollary \ref{cor DHKM} we deduce the following corollary, which is Proposition \ref{prop Yang} for $\ell$-modular representations:

\begin{corollary}\label{cor modular Yang}
For all irreducible $\ell$-modular $\overline{\theta}$-generic representation $\bar{\pi}$ of $G$, we have:
$$\overline{\pi}^\vee\simeq \overline{\pi}\circ I_{\theta}.$$
\end{corollary}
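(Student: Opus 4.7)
The plan is to lift $\overline{\pi}$ to characteristic zero, apply the $\ell$-adic Proposition \ref{prop Yang}, and descend. By Corollary \ref{cor DHKM}, pick an integral irreducible $\theta$-generic $\ell$-adic representation $\pi$ such that $\overline{\pi}$ is the unique $\overline{\theta}$-generic constituent of the semisimple $\rl(\pi)$, with multiplicity one. Proposition \ref{prop Yang} applied to $\pi$ yields an $\ell$-adic isomorphism $\pi^\vee \simeq \pi \circ I_\theta$. Choosing a $G$-stable $\Zl$-lattice in $\pi$ and transporting it along this isomorphism, then reducing modulo $\ell$, one obtains an isomorphism of semisimple $\ell$-modular representations
\[\rl(\pi)^\vee \simeq \rl(\pi) \circ I_\theta,\]
using that reduction modulo $\ell$ commutes both with the contragredient on integral admissible representations and with precomposition by the automorphism $I_\theta$.

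Since $\Theta \circ I_\theta = \Theta^{-1}$, precomposition by $I_\theta$ interchanges $\overline{\theta}$-generic and $\overline{\theta}^{-1}$-generic irreducible constituents. Consequently the right-hand side above has $\overline{\pi} \circ I_\theta$ as its unique $\overline{\theta}^{-1}$-generic constituent, and the isomorphism forces $\rl(\pi)^\vee$ to do the same. Since $\overline{\pi}^\vee$ is a constituent of $\rl(\pi)^\vee$ (because $\overline{\pi}$ is one of $\rl(\pi)$), it will suffice to show that $\overline{\pi}^\vee$ is $\overline{\theta}^{-1}$-generic, for then uniqueness forces $\overline{\pi}^\vee \simeq \overline{\pi} \circ I_\theta$.

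To verify the $\overline{\theta}^{-1}$-genericity of $\overline{\pi}^\vee$, I distinguish two cases. If $\overline{\pi}$ is cuspidal, then it is $Z$-compact by \cite[II.2.7]{Vbook}, and Theorem \ref{multiplicities_main_thm} over $R = \Fl$ directly gives $d_{\overline{\theta}^{-1}}(\overline{\pi}^\vee) = d_{\overline{\theta}}(\overline{\pi}) = 1$. In the general case, pick a supercuspidal irreducible $\overline{\rho}$ of a Levi $M$ such that $\overline{\pi}$ is a subquotient of $\Ind_P^G(\overline{\rho})$. By Rodier's heredity (Theorem \ref{inequality_whittaker_section_thm}) and Theorem \ref{Thm M1}, $\overline{\rho}$ is $\overline{\theta}_M$-generic, and the cuspidal case applied to $\overline{\rho}$ gives that $\overline{\rho}^\vee$ is $\overline{\theta}_M^{-1}$-generic. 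Using the standard identification $(\Ind_P^G(\overline{\rho}))^\vee \simeq \Ind_P^G(\overline{\rho}^\vee \otimes \delta_P)$ and the triviality of $\delta_P|_{N_M}$, heredity yields $d_{\overline{\theta}^{-1}}(\Ind_P^G(\overline{\rho}^\vee \otimes \delta_P)) = 1$, isolating a unique $\overline{\theta}^{-1}$-generic subquotient. Matching this with $\overline{\pi}^\vee$ via the mod $\ell$ reduction of the $\ell$-adic characterization of $\pi^\vee$ as the unique $\theta^{-1}$-generic subquotient of $\Ind_P^G(\rho^\vee \otimes \delta_P)$ (for an integral lift $\rho$ of $\overline{\rho}$) concludes that $\overline{\pi}^\vee$ is $\overline{\theta}^{-1}$-generic.

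The main obstacle lies in this final matching step in the non-cuspidal case: one must carefully track the compatibility of mod $\ell$ reduction with parabolic induction and contragredient to confirm that $\overline{\pi}^\vee$ genuinely is the $\overline{\theta}^{-1}$-generic subquotient of the induced representation rather than a non-generic one.
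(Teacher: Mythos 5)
Your proof takes essentially the same route as the paper's: lift $\overline{\pi}$ via Corollary~\ref{cor DHKM}, apply Proposition~\ref{prop Yang} to get $\pi^\vee \simeq \pi \circ I_\theta$ over $\Ql$, reduce modulo $\ell$ using compatibility of reduction with contragredient and with precomposition by $I_\theta$, identify $\overline{\pi}\circ I_\theta$ as the unique $\overline{\theta}^{-1}$-generic factor of $\rl(\pi^\vee)$, and match it with $\overline{\pi}^\vee$.

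Where you diverge from the paper is in the last step, the $\overline{\theta}^{-1}$-genericity of $\overline{\pi}^\vee$. The paper settles this in one line: it cites Theorem~\ref{multiplicities_main_thm} for the genericity of $\overline{\pi}^\vee$ and \cite[I.9.7]{Vbook} for the fact that $\overline{\pi}^\vee$ is a factor of $\rl(\pi^\vee)$, after which uniqueness of the generic factor forces $\overline{\pi}^\vee\simeq\overline{\pi}\circ I_\theta$. You, by contrast, split into the cuspidal case (where Theorem~\ref{multiplicities_main_thm} applies directly since cuspidal implies $Z$-compact) and a general case handled by dualizing the cuspidal support and using heredity, and you flag the non-cuspidal ``matching step'' as unresolved. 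Note that you do not actually need to analyze $(\Ind_P^G(\overline{\rho}))^\vee$ at all: it suffices (as the paper does) to know that $\overline{\pi}^\vee$ is $\overline{\theta}^{-1}$-generic and sits in $\rl(\pi^\vee)=\rl(\pi\circ I_\theta)$, and then uniqueness does the rest; your detour through $\Ind_P^G(\overline{\rho}^\vee\otimes\delta_P)$ introduces exactly the kind of ``which constituent is the generic one'' bookkeeping that the paper avoids. That said, your instinct to be careful with the non-cuspidal case is not unreasonable, since Theorem~\ref{multiplicities_main_thm} is stated only for $Z$-compact representations; if one wants to be pedantic, the paper's one-line appeal to it for a general irreducible $\overline{\pi}$ is also the place where the reader must supply the heredity reduction you allude to. In short: same approach, but you should drop the induced-representation detour and use the paper's cleaner uniqueness argument once genericity of $\overline{\pi}^\vee$ is in hand.
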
 

\begin{proof}
By Corollary \ref{cor DHKM}, the representation $\overline{\pi}\circ I_{\theta}$ is the unique $\overline{\theta}^{-1}$-generic factor in $r_\ell(\pi \circ I_\theta)$. Now Proposition \ref{prop Yang} implies that $r_\ell(\pi \circ I_\theta) \simeq r_\ell(\pi^\vee)$. But the contragredient $\overline{\pi}^\vee$ is $\bar{\theta}^{-1}$-generic by Theorem \ref{multiplicities_main_thm} and it is a factor of $r_\ell(\pi^\vee)$ by \cite[I.9.7]{Vbook}. So the claim follows. 
\end{proof}

\bibliographystyle{plain}
\bibliography{Modlfactors}

\end{document}